\theoremstyle{plain}
\newtheorem{thm}{Theorem}
\newtheorem{lemma}[thm]{Lemma}
\newtheorem{prop}[thm]{Proposition}
\theoremstyle{definition}
\newtheorem{defn}[thm]{Definition}
\newtheorem{example}[thm]{Example}
\newtheorem{nonexample}[thm]{Non-example}
\theoremstyle{remark}
\newtheorem{rem}[thm]{Remark}
\numberwithin{thm}{section}
\newcommand{\abs}[1]{\left\lvert#1\right\rvert}
\newcommand{\norm}[1]{\left\lVert#1\right\rVert}
\newcommand{\ang}[1]{\left\langle #1 \right\rangle}
\newcommand{\floor}[1]{\left\lfloor #1 \right\rfloor}
\newcommand{\paren}[1]{\left( #1 \right)}
\newcommand{\wh}{\widehat}
\newcommand{\ol}{\overline}
\newcommand{\bx}{\bm{x}}
\newcommand{\cH}{\mathcal{H}}
\newcommand{\cD}{\mathcal{D}}
\newcommand{\cE}{\mathcal{E}}
\newcommand{\cP}{\mathcal{P}}
\newcommand{\cQ}{\mathcal{Q}}
\newcommand{\EEE}{\mathop{\mathbb{E}}}
\newcommand{\FF}{\mathbb{F}}
\newcommand{\ZZ}{\mathbb{Z}}
\newcommand{\C}{\mathbb C}
\newcommand{\E}{\mathbb E}
\newcommand{\F}{\mathbb F}
\DeclareMathOperator{\fnz}{fnz}
\DeclareMathOperator*{\spn}{span}
\DeclareMathOperator{\rank}{rank}
\DeclareMathOperator{\codim}{codim}
\title[Induced arithmetic removal]{Induced arithmetic removal: \\ complexity 1 patterns over finite fields}
\author{Jacob Fox}
\address{Stanford University, Stanford, CA 94305, USA}
\email{jacobfox@stanford.edu}
\author{Jonathan Tidor}
\author{Yufei Zhao}
\address{Massachusetts Institute of Technology, Cambridge, MA 02139, USA}
\email{\{jtidor,yufeiz\}@mit.edu}
\thanks{Fox was supported by a Packard Fellowship and by NSF grant DMS-1855635. Tidor was supported by NSF Graduate Research Fellowship Program DGE-1122374. Zhao was supported by NSF Award DMS-1764176, the MIT Solomon Buchsbaum Fund, and a Sloan Research Fellowship.}
\date{}
\begin{document}

\begin{abstract}
We prove an arithmetic analog of the induced graph removal lemma for complexity 1 patterns over finite fields. Informally speaking, we show that given a fixed collection of $r$-colored complexity 1 arithmetic patterns over $\mathbb F_q$, every coloring $\phi \colon \mathbb F_q^n \setminus\{0\} \to [r]$ with $o(1)$ density of every such pattern can be recolored on an $o(1)$-fraction of the space so that no such pattern remains.
\end{abstract}

\maketitle

\section{Introduction}
\label{sec:intro}

Removal lemmas play a central role in extremal and additive combinatorics (see the survey \cite{CF13}). The \emph{triangle removal lemma}, first proved by Ruzsa and Szemer\'edi \cite{RS78}, states that for every $\epsilon > 0$ there exists $\delta > 0$ such that every $n$-vertex graph with fewer than $\delta n^3$ triangles can be made triangle-free by deleting at most $\epsilon n^2$ edges. 

In computer science, there is much interest in removal lemmas due to the connection between removal lemmas and constant query complexity algorithms for property testing~\cite{RS96,GGR98}. For example, the triangle removal lemma implies that to distinguish a triangle-free graph from one that is $\epsilon$-far from triangle free, one simply needs to sample a constant number of triples of vertices (this constant depends on $\epsilon$, but not the size of the graph) and check them for triangles. 
All our discussions and results below can be interpreted using the language of property testing.

An arithmetic analog of the triangle removal lemma was first established by Green \cite{G05}. It states that for every $\epsilon > 0$ there exists $\delta > 0$ such that for every finite abelian group $(G,+)$ and every $A \subset G$ with at most $\delta |G|^2$ triples $(x,y,z) \in A^3$ satisfying $x+y=z$, there exists $A' \subseteq A$ with $|A \setminus A'| \le \epsilon |G|$ such that $A'$ contains no solution to $x+y=z$.

Green's arithmetic triangle removal lemma also has a corresponding property testing algorithm. This algorithm distinguishes between \emph{triangle-free} maps $f\colon G\to\{0,1\}$ (i.e., those with no $x,y\in G$ satisfying $f(x)=f(y)=f(x+y)=1$) and maps that are $\epsilon$-far from triangle free by sampling a constant number of pairs $(x,y)\in G^2$ and checking if any satisfy $f(x)=f(y)=f(x+y)=1$.

Generalizing the triangle removal lemma, there is a \emph{graph removal lemma} which states that for every fixed graph $H$ (or a finite collection of graphs), every $n$-vertex graph with $H$-density $o(1)$ can be made $H$-free by removing $o(n^2)$ edges \cite{ADLRY94, F95}.  Furthermore, there is an \emph{induced graph removal lemma} \cite{AFKS00} which states that every $n$-vertex graph with induced $H$-density $o(1)$ can be made induced $H$-free by adding or removing $o(n^2)$ edges. Both of these removal lemmas give constant query complexity algorithms for property testing of the corresponding properties.

The graph removal lemma was proved using Szemer\'edi's graph regularity lemma, an important tool in extremal combinatorics.  The arithmetic removal lemma was initially proved by Green via an arithmetic variant of the graph regularity lemma; an alternate proof was later given by Kr\'al'--Serra--Vena \cite{KSV09}, deducing the arithmetic removal lemma from the graph removal lemma. This technique was extended to deduce a removal lemma for general linear systems from the hypergraph removal lemma \cite{KSV12, S10}.

A natural common extension of the induced graph (or hypergraph) removal lemma and the arithmetic removal lemma would be an induced arithmetic removal lemma, which is not currently known (e.g., \cite[Conjecture 5.3]{S10}). In particular, the technique for deducing arithmetic removal from graph removal (from \cite{KSV09}) does not appear to extend to the induced setting. 

The goal of this paper is to establish an induced arithmetic removal lemma for arithmetic patterns of complexity 1 over finite fields. (See Section~\ref{ssec:complexity} for the definition of complexity.) Previous results \cite{BGS15} established this claim over $\FF_2$, though the proof requires specific Ramsey-theoretic results that fail for all other finite fields. We overcome this difficulty by introducing a novel recoloring technique that works over all base fields.

It remains an open problem to extend the induced arithmetic removal lemma to more general settings, both to higher complexity patterns and to general groups (even $\ZZ$ is open). For affine-invariant patterns (which are always translation-invariant), a special case of the general problem that avoids an important difficulty (no regularization near the origin; see discussion in Section~\ref{sec:summary}), the problem for bounded complexity patterns over finite fields has been solved \cite{BFHHL13}.

We begin with a discussion motivating the precise setup of our main theorem. In Section~\ref{sec:summary} we review the background on the regularity proof of the graph removal lemma and its relationship to our proof, and we describe our proof strategy. In Section~\ref{sec:main-thm} we prove our main theorem, modulo two key tools, proved in Sections~\ref{sec:rado} and \ref{sec:regularity}. In Section~\ref{sec:extensions} we give two extensions of our main theorem, to an infinite set of patterns and to inhomogeneous patterns.

\subsection*{Acknowledgements}
The authors are grateful to Noga Alon, Freddie Manners, and Tom Sanders for helpful discussions.

\subsection{Colored patterns}
Induced subgraphs can be equivalently rephrased as colored patterns in a 2-edge-colored clique, where the two colors correspond to edges and non-edges. We consider the natural generalization allowing an arbitrary fixed number of edge colors. 

The proof of the induced graph removal lemma gives the following result about colored patterns. Given $r$-edge-colored cliques $G$ and $H$, the \emph{$H$-density} in $G$ is defined to be the fraction of $|H|$-vertex subgraphs of $G$ that have edge-coloring is isomorphic to $H$. We say that $G$ is \emph{$H$-free} if no such subgraph exists.

\begin{thm}[Induced graph removal]
	Fix a finite collection $\cH$ of $r$-edge-colored cliques. For every $\epsilon > 0$ there exists $\delta >0$ so that if an $r$-edge-coloring of $K_n$ has $H$-density at most $\delta$ for every $H \in \cH$, then one can recolor at most $\epsilon n^2$ edges of $K_n$ (using the original $r$ colors) so that the resulting coloring is $H$-free for every $H \in \cH$.
\end{thm}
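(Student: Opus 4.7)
The plan is to adapt the Alon--Fischer--Krivelevich--Szegedy approach to the $r$-edge-colored setting via a multicolor regularity-plus-counting argument. First, I would apply a multicolor version of Szemer\'edi's regularity lemma to the coloring $\phi$ of $K_n$: running the standard regularity lemma once per color class (viewing each color class as a graph) and taking the common refinement produces an equipartition $V_1, \ldots, V_M$ such that all but an $\eta$-fraction of the pairs $(V_i, V_j)$ are $\eta$-regular simultaneously in every color. Here $M = M(\eta, r)$ is independent of $n$.

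Next, I would define a cleaned coloring $\phi'$ as follows. For each regular pair $(V_i, V_j)$ with color-density vector $(d_1, \ldots, d_r)$ summing to $1$, recolor every color-$c$ edge with $d_c < \tau$ to some fixed color $c^{\star}_{ij}$ with $d_{c^{\star}_{ij}} \geq \tau$; such a color always exists when $\tau < 1/r$. Edges in irregular pairs or inside a single part are recolored arbitrarily. The total number of recolored edges is at most $(r\tau + \eta + 1/M) n^2$, which can be made at most $\epsilon n^2$ by taking $\tau, \eta, 1/M$ small.

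Finally, I would invoke a multicolor counting lemma to derive a contradiction. Suppose $\phi'$ still contains an $r$-edge-colored induced copy of some $H \in \cH$ on vertices $u_j \in V_{i_j}$ with edge colors $c_{ab}$. By construction each such color satisfies $d_{c_{ab}}(V_{i_a}, V_{i_b}) \geq \tau$ in the \emph{original} coloring $\phi$, and the corresponding pair is $\eta$-regular in that color. A standard embedding argument applied one color at a time yields at least $(\tau^{|E(H)|} - C_H \eta) \prod_j |V_{i_j}|$ copies of $H$ in $\phi$, which is $\Omega_H(n^h/M^h)$ for $\eta$ sufficiently small. Taking $\delta$ smaller than this threshold (uniformly over $H \in \cH$) forces $\phi'$ to be $\cH$-free, as required.

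The main obstacle is managing the nested parameters $\delta \ll 1/M \ll \eta \ll \tau \ll \epsilon$: we need $\tau$ and $\eta$ small relative to $\epsilon$ to keep the cleaning cheap, $\eta$ small compared to $\tau^{|E(H)|}$ for the counting lemma to be effective, and $\delta$ tiny enough that $\delta n^h$ is exceeded by the count guaranteed by regularity. Verifying the multicolor counting lemma, in particular that induced colored pattern counts are controlled by the simultaneous densities of the specified colors on each pair, is the technical heart of the proof; everything else is a direct analogue of the classical argument.
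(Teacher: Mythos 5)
Your proposal reuses the \emph{non-induced} removal strategy, and that strategy breaks precisely at the point the paper flags in Section~\ref{sec:summary}: the clean-up of irregular pairs. In the uncolored setting one deletes all edges in an irregular pair, and deletion can only destroy copies of $H$. In the colored setting every edge must retain one of the $r$ colors, so recoloring an irregular pair ``arbitrarily'' can \emph{create} a colored copy of $H$. When you then run the counting step, the assertion ``by construction $d_{c_{ab}}(V_{i_a},V_{i_b}) \geq \tau$ in the original coloring and the pair is $\eta$-regular'' fails exactly for any edge of $H$ that lands in an irregular pair (or inside a single part, or between two vertices in the same part): for such an edge you have no density lower bound in $\phi$ and no regularity, so the embedding argument cannot be started. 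The claim is not that there are \emph{few} such bad copies of $H$ in $\phi'$, but that there are \emph{none}, so even a single bad pair left unaddressed defeats the argument.

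The standard fix --- which is the one the paper describes, from \cite{AFKS00} --- is to replace weak regularity by a strong (iterated) regularity lemma that produces a partition $V_1,\dots,V_M$ together with representative subsets $U_i \subseteq V_i$ of positive relative size such that \emph{every} pair $(U_i,U_j)$ is regular in every color and the color densities between $(U_i,U_j)$ track those between $(V_i,V_j)$ on average. You then recolor edges between $V_i$ and $V_j$ according to the colors that have positive density between $U_i$ and $U_j$. Because every pair of representative sets is regular, the counting step never encounters an irregular pair, and you can also handle multiple vertices of $H$ falling into the same $V_i$ by choosing distinct sub-pieces inside it. Without this extra ingredient the proposal does not go through; with it, the rest of your outline (one-color-at-a-time regularization, a multicolor counting lemma, and the parameter hierarchy $\delta \ll 1/M \ll \eta \ll \tau \ll \epsilon$) is sound.
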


An analogous statement for hypergraphs is also known to hold \cite{RS07}. In addition, there is even a result allowing an infinite collection of subgraphs $\cH$ (with a slightly modified statement); see Section~\ref{ssec:infinite-removal}.

We would like to obtain a similar statement for colored arithmetic patterns. However, there must be some caveats, as illustrated by the following two non-examples.

\begin{nonexample}
	Consider the 2-coloring of $\ZZ/N\ZZ$ with $1, \dots, \floor{N/2}$ colored red and the remaining elements colored blue, and the pattern $x = y+1$ where $x$ is blue and $y$ is red. There is only one instance of this pattern in the given coloring, but it is impossible to recolor fewer than $\floor{N/2}$ elements to remove this pattern. For an example with a homogeneous equation, a similar situation occurs for $x=2y$ as long as 2 has large multiplicative order in $(\ZZ/N\ZZ)^\times$.
\end{nonexample}

The above non-example does not worry us too much since it has infinite complexity and is not over a fixed finite field. Nevertheless, it reminds us that we need to be cautious when formulating a more general conjecture about induced arithmetic removal.

The second non-example is more relevant to our discussion as it is a complexity 1 pattern over a fixed finite field.

\begin{nonexample}
\label{thm:mono-non-example}
Consider the 4-coloring $\phi \colon \F_5^n \to [4]$ defined as follows: $\phi((0,\ldots,0)) = 1$ and $\phi((0, \dots, 0, x_k, \dots, x_n)) = x_k$ where $x_k \ne 0$ is the first nonzero coordinate. We consider monochromatic solutions to $x+y+z=0$. The only such solution is $x=y=z=0$, but it always remains a monochromatic solution no matter how $\phi$ is modified. Thus there is no removal lemma here either.
\end{nonexample}

This non-example shows that we need to treat zero with care. While not all patterns exhibit the above behavior, a simple way to remove this issue in general seems to be to ignore 0 entirely. One might attempt to consider a less drastic alternative by only ignoring the all-zero solution, but this does not always work, e.g., we can add some additional unconstrained dummy variables to the system. It seems plausible that for every fixed system of patterns one can identify a ``minimal'' notion of a trivial solution that should be ignored, but we take the simpler route here by uniformly ignoring zero. An alternative and weaker possible formulation, which is implied by our main theorem, is to only remove \emph{generic} solutions, i.e., solutions whose span has maximum possible dimension.

As an example of our main result, Theorem~\ref{thm:main}, applied to the above system, we prove:
\begin{quote}
If $\phi\colon \F_5^n\setminus\{0\} \to [4]$ has $o(5^{2n})$ monochromatic solutions to $x+y+z=0$, then one can remove all such solutions by recoloring an $o(1)$-fraction of the space.
\end{quote}

\begin{defn}
An \emph{$r$-colored pattern} $H = (A, \psi)$ is represented by an $\ell\times k$ matrix $A$ with entries in $\FF_q$ along with a coloring $\psi \colon [k] \to [r]$ of the columns of $A$.  Let $V$ be a finite dimensional $\FF_q$-vector space. An \emph{$H$-instance} in a coloring $\phi \colon V \to [r]$ is some $\bm x = (x_1, \dots, x_k) \in V^k$ satisfying $A \bm x = \bm 0$ and $\phi(x_i)= \psi(i)$ for all $i \in [k]$. The \emph{$H$-density} is defined to the number of $H$-instances divided by $\abs{V}^{k - \rank A}$, i.e., the probability that a uniform random $\bm x \in V^k$ satisfying $A\bm x = \bm 0$ also satisfies $\phi(x_i) = \psi(i)$ for all $ i \in [k]$. We say that $\phi$ is \emph{$H$-free} if there are no $H$-instances $\bm x \in (V\setminus\{0\})^k$.
\end{defn}

We generally need to consider a collection of colored patterns.

\begin{example}
	Monochromatic solutions to $x+y+z = 0$ can be encoded by a collection of $r$ separate $r$-colored patterns, $\{(A,\psi_i)\}_{i\in[r]}$, all having the same $A = ( 1 \ 1 \ 1)$ where $\psi_i\equiv i$ is the constant function.
\end{example}

\begin{example}
	A rainbow solution to $x+2y+3z=0$ is one where $x$, $y$, and $z$ have distinct colors, and it can be encoded by a collection of $r(r-1)(r-2)$ separate $H$'s, all having the same $A = (1 \ 2 \ 3)$, and $\psi$ ranging over all injective maps $[3] \to [r]$.
\end{example}

\begin{example}
	The property of having either a blue 3-term arithmetic progression or a red 4-term arithmetic progression can be encoded by two patterns:
	\[
	\paren{A = \begin{pmatrix} 1 & -2  & 1 \end{pmatrix}, \psi \equiv \text{blue}} \quad \text{and}\quad
	\paren{A = \begin{pmatrix}
		1 & -2 & 1 & 0 \\
		0 & 1 & -2 & 1
	\end{pmatrix}, \psi \equiv \text{red}
	}.
	\]
	Note that the second pattern has complexity 2.
\end{example}

\begin{example}
    Suppose that $p$ is prime. We consider maps $\phi\colon\F_p^n\setminus\{0\}\to\F_p$ as $p$-colorings of $\F_p^n$. Consider the set of patterns \[\cH=\{(A,\psi_{a,b,c})\}_{\genfrac{}{}{0pt}{}{a,b,c\in\F_p}{a+b\neq c}}\] where $A=\begin{pmatrix} 1 & 1 & -1\end{pmatrix}$ and $\psi_{a,b,c}$ is defined by  $\psi_{a,b,c}(1)=a, \psi_{a,b,c}(2)=b, \psi_{a,b,c}(3)=c$. Then $\phi\colon\F_p^n\setminus\{0\}\to\F_p$ is linear if and only if it is $\cH$-free.
\end{example}

\subsection{Complexity 1}
\label{ssec:complexity}
Fourier analysis, also known in this context as the Hardy--Littlewood circle method, allows us to analyze certain patterns such as $x+y=z$ or 3-term arithmetic progressions, but not others such as 4-term arithmetic progressions. This distinction is captured by a notion known as the \emph{complexity} of a linear system, introduced by Green and Tao in their work on linear patterns in the primes~\cite{GT10}. Informally, a system has \emph{complexity $s$} it is ``controlled'' by the $U^{s+1}$ Gowers uniformity norm (the \emph{true complexity}~\cite{GW11b} of the system is the smallest such $s$), which we will not define here in general as in this paper we only address complexity 1 systems. 

We consider weighted, or functional, versions of densities of linear patterns. For a finite dimensional $\FF_q$-vector space $V$, an $\ell\times k$ matrix $A$ over $\FF_q$, and functions $f_1, \dots, f_k \colon V \to \C$, we write
\[
\Lambda_A(f_1, \dots, f_k) := \E_{\bm x = (x_1, \dots, x_k) \in V^k : A \bm x = \bm 0} [f_1(x_1)\cdots f_k(x_k)].
\]
Then the $(A, \psi)$-density in $\phi \colon V \to [r]$ is given by $\Lambda_A(1_{\phi^{-1}(\psi(1))},\dots, 1_{\phi^{-1}(\psi(k))})$.

\begin{example} For the matrix $A = (1 \ -2 \ 1)$ corresponding to the pattern $x-2y+z=0$, one has
\[
\Lambda_A(f_1, f_2, f_3) = \E_{x,y \in V}[f_1(x)f_2(y)f_3(2y-x)].
\]
\end{example}

The above expression illustrates the equivalence between expressing a linear pattern as a linear system of equations (e.g., $x-2y+z=0$) and as a collection of linear forms of unconstrained variables (e.g., $(x,y,2y-x)$). The complexity of a system is often defined in the literature in the latter formulation. It is sometimes useful to keep both versions in mind.

The Fourier transform of a function $f \colon V \to \C$ is $\wh f \colon V^* \to \C$ given by
\[
\wh f(\gamma) := \ang{f, \gamma} = \EEE_{x \in V} f(x) \ol{\gamma(x)}
\]
where $\gamma \in V^*$ is a character, i.e., $\gamma \colon V \to \C^\times$ satisfying $\gamma(x+y) = \gamma(x)\gamma(y)$ for all $x,y \in V$. For a finite dimensional $\FF_q$-vector space $V$ of characteristic $p$, one has $V^* \cong V$ by associating $z \in V$ with the character $\gamma_z$ given by $\gamma_z(x) := \exp(  2\pi i \operatorname{tr}(x \cdot z)/p)$ where $x \cdot z := x_1 z_1 + \cdots + x_nz_n$ is the dot product and $\text{tr} \colon \FF_q \to \F_p$ is the trace map. It is fine to think only about the slightly simpler case of prime $q = p$ so that we can write $\wh f (\gamma_z) = \EEE_{x \in V} f(x) e^{-2\pi i (x \cdot z)/p}$ for $z \in V$.

\begin{defn}
We say that an $\ell\times k$ matrix $A$ over $\FF_q$ has \emph{complexity 1} if for every $\epsilon > 0$ there exists $\delta >0$ so that for every finite dimensional $\FF_q$-vector space $V$ and $f_1,\ldots,f_k\colon V \to[-1,1]$, one has
\[
|\Lambda_A(f_1,\ldots,f_k)|\leq\epsilon \quad \text{whenever} \quad  \min_{1\leq i\leq k}\|\wh f_i\|_\infty\leq\delta.
\]
\end{defn}

It is known that a system given by linear forms $L_1, \dots, L_k$ (i.e., generating the null space of $A$) has complexity 1 if and only if the quadratic forms $L_1^2, \dots, L_k^2$ are linearly independent (a technical modification is required if $q$ is even) \cite{GW11a, GW11b, HHL16}.  The general dependence of $\delta$ on $\epsilon$ is a wide open problem, though conjecturally $\delta = \epsilon^{O_A(1)}$ works. See \cite{M18} for some discussion.

\begin{example}
\label{thm:one-equation}
Any single equation $\sum a_ix_i=0$ with at least three non-zero coefficients is a complexity 1 system. In matrix form, this says that any matrix $A$ of dimensions $1\times k$ has complexity 1 as long as it has at least three non-zero entries.
\end{example}

\begin{example}
Let $G=(V,E)$ be a directed graph. Suppose $A$ is a matrix with $|E|$ columns satisfying the property that $\bx=(x_e)_{e\in E}$ satisfies $A\bx=\bm0$ if and only if the signed sum of $x_e$ around each cycle of $G$ is 0. Such an $A$ is complexity 1. This example can be generalized by scaling each column of $A$ by a non-zero constant. In this form Example~\ref{thm:one-equation} is a special case corresponding to the cycle.
\end{example}

\begin{example}
There are complexity 1 systems that do not arise from the above constructions. One example is given by\[A=\begin{pmatrix}
2 & 1 & 1 &-1 & 0 & 0\\
1 & 2 & 1 & 0 &-1 & 0\\
1 & 1 & 2 & 0 & 0 &-1\end{pmatrix}\quad\text{over }\FF_7.\]For those familiar with the terminology, this system has Cauchy-Schwarz complexity 2, but true complexity 1. All systems arising from the two previous examples have true complexity and Cauchy-Schwarz complexity 1.
\end{example}

Now we are ready to state our main result. 

\begin{thm}[Induced arithmetic removal]
\label{thm:main}
Fix a finite set $\cH$ of $r$-colored complexity 1 patterns over $\FF_q$. For every $\epsilon>0$ there exists $\delta=\delta(\epsilon,\cH) > 0$ such that the following holds. Given a finite dimensional $\FF_q$-vector space $V$, if a coloring $\phi \colon V \to [r]$ has $H$-density at most $\delta$ for every $H \in \cH$, then there exists a recoloring $\phi' \colon V\setminus\{0\} \to [r]$ that is $\cH$-free and differs from $\phi$ on at most $\epsilon|V|$ elements of $V$.
\end{thm}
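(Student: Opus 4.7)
The plan mirrors the regularity-based proof of the induced graph removal lemma, with the arithmetic regularity lemma in place of Szemer\'edi regularity and a new coset-level recoloring as the decisive new step.

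First, I apply the complexity-1 arithmetic regularity lemma over $\FF_q^n$ (proved in Section~\ref{sec:regularity}) simultaneously to the indicator functions $\one_{\phi^{-1}(i)}$ for $i \in [r]$. This yields a subspace $W \le V$ of codimension bounded in terms of $\epsilon$ and $\cH$ alone, along with decompositions $\one_{\phi^{-1}(i)} = f_i^{\mathrm{str}} + f_i^{\mathrm{sml}} + f_i^{\mathrm{psr}}$, where $f_i^{\mathrm{str}}$ is constant on $W$-cosets, $f_i^{\mathrm{sml}}$ is small in $L^2$, and $f_i^{\mathrm{psr}}$ is small in Fourier $L^\infty$. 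Since every $H \in \cH$ has complexity 1, the $H$-count is determined up to $o(1)$ error by the structured parts alone, i.e., by the coset densities. Thus $\phi$ is effectively replaced by a fractional coloring $\Phi \colon V/W \to [0,1]^r$ sending each coset $\bar v$ to its vector $(p_1(\bar v),\dots,p_r(\bar v))$ of color densities in $\phi$, and $H$-density transfers from $\phi$ on $V$ to $\Phi$ on $V/W$ up to $o(1)$.

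Next, I choose a small threshold $\eta=\eta(\epsilon,\cH)$ and define the set of \emph{heavy colors} at $\bar v$ by $C(\bar v) := \{i \in [r] : p_i(\bar v) \ge \eta\}$. The smallness of the $H$-count of $\Phi$ forces that, for all but a vanishing fraction of solutions $\bm{\bar v}$ to $A\bm{\bar v}=\bm 0$ and every $(A,\psi) \in \cH$, some $\psi(i) \notin C(\bar v_i)$. The Rado-type lemma of Section~\ref{sec:rado} upgrades this to the existence of \emph{admissible} subsets $C'(\bar v) \subseteq C(\bar v)$ satisfying (i) $\sum_{i \in C'(\bar v)} p_i(\bar v) \ge 1 - o(1)$ for almost every $\bar v$ and (ii) for every $(A,\psi) \in \cH$ and every $\bm{\bar v}$ with $A\bm{\bar v}=\bm 0$, some $\psi(i) \notin C'(\bar v_i)$. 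I then define $\phi' \colon V \setminus \{0\} \to [r]$ inside each coset $\bar v$ by keeping $\phi(x)$ when $\phi(x) \in C'(\bar v)$ and otherwise reassigning to a canonical element of $C'(\bar v)$; the special treatment of the origin, required by Non-example~\ref{thm:mono-non-example}, is permissible because $\phi'$ is only required on $V \setminus \{0\}$. Property (i) together with the definition of heaviness bounds the number of recolored elements by $\epsilon |V|$, and (ii) together with the fact that every solution $A\bm x = \bm 0$ descends to a solution $A\bm{\bar x}=\bm 0$ in $V/W$ yields $\cH$-freeness.

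The main obstacle is the Rado-type lemma itself, and this is where the characteristic-independent novelty lies. Merely restricting to heavy colors may still permit $H$-patterns to reappear across cosets, and passing to a smaller admissible $C'(\bar v)$ must be done so that the heavy mass on almost every coset is preserved. I expect the proof to proceed by an iterative, greedy refinement of the heavy-color sets guided by the partition-regularity structure of complexity-1 systems over $\FF_q$, ultimately reducing to a bounded combinatorial statement about $r$-colorings of small affine quotients; the Ramsey-theoretic argument of~\cite{BGS15} works only for $q=2$, so the key task is to supply a mechanism that is uniform in the characteristic $p$.
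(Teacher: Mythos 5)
Your high-level architecture (regularize, reduce to coset densities, recolor by keeping frequently-occurring colors, invoke a Ramsey-type lemma to certify $\cH$-freeness) resembles the paper's three-step strategy, but there is a genuine gap at the step you call the ``Rado-type lemma,'' and the gap is precisely the one the paper identifies as the central obstacle.

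Your property (ii) demands that for \emph{every} solution $\bm{\bar v}$ to $A\bm{\bar v}=\bm 0$ in $V/W$ and every $(A,\psi)\in\cH$, some $\psi(i)\notin C'(\bar v_i)$. Take $\bm{\bar v}=\bm 0$: the requirement becomes that for every pattern $H=(A,\psi)\in\cH$, some color in the image of $\psi$ is excluded from $C'(\bar 0)$. If $\cH$ contains, say, the monochromatic-$c$ Schur pattern for every color $c\in[r]$ (the motivating Example), then $C'(\bar 0)$ must be empty, and your recoloring rule has nowhere to send the elements of the zero coset $W\setminus\{0\}$. Removing the single point $0$ (as you suggest via Non-example~\ref{thm:mono-non-example}) does not help: the problem lives in the whole coset $W$, which after quotienting collapses to the single point $\bar 0$ of $V/W$ yet still contains $|W|-1$ elements of $V\setminus\{0\}$. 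Moreover, $W$ cannot be regularized in the first place --- the Green--Sanders construction (cited as \cite{GS16}) exhibits $S\subseteq\FF_3^n$ that is irregular inside every positive-dimensional subspace --- so a counting lemma cannot control solutions whose variables fall into $W$. This is exactly why no recoloring of $W$ that is constrained to a small ``structured'' color set $C'(\bar 0)$ can be forced to work.

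The paper's way out is different in kind, not just in detail. It does not try to prune heavy-color sets at the origin coset at all; instead it proves a \emph{Ramsey dichotomy} (Proposition~\ref{thm:rado-dichotomy}): either every coloring of a large enough $\FF_q$-space has $H$-density bounded below by a constant $\epsilon_{\mathsf{rado}}$ for some $H\in\cH$, or for every $\FF_q$-space there is a coloring of the nonzero vectors that is $\cH$-free. The first alternative is ruled out by the hypothesis that $\phi$ has low $H$-density (applied to restrictions to $V_2$, and crucially to all \emph{subpatterns} of patterns in $\cH$, to handle solutions that lie partly in and partly out of $V_1$); the second alternative supplies an entirely unstructured ``patch'' coloring on $V_1\setminus\{0\}$, not tied to any density profile. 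The proof of the dichotomy is a reduction to canonical colorings via Gaussian elimination, the Graham--Rothschild vector space Ramsey theorem, a lifting trick to produce \emph{generic} instances, and a double-counting/sampling argument to convert ``one generic instance in every $n_{\mathsf{rado}}$-dimensional subspace'' into positive density --- none of which resembles the ``iterative greedy refinement of heavy-color sets'' you sketch. So the missing idea is: the zero coset must be recolored by a device that is logically decoupled from coset densities, and that device is the Ramsey dichotomy together with the reduction to subpatterns.
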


\begin{rem}
The dependence of $1/\delta$ on $1/\epsilon$ is tower-type. Though we write $\delta=\delta(\epsilon,\cH)$ for simplicity, the dependence on $\cH$ is not strong. We could instead write $\delta=\delta(\epsilon,q,r,k)$ where $q$ is the size of the finite field, $r$ is the number of colors used, and $k$ is the maximum number of columns in any of the patterns in $\cH$. 
\end{rem}

For those familiar with the terminology, our main theorem states the following in the language of property testing: every linear-invariant, linear-subspace hereditary property of complexity 1 is testable with constant query complexity and one-sided error. (To be precise one needs the generalization to an infinite number of patterns; see Theorem~\ref{thm:infinite}.)

We conjecture that a similar statement holds for arbitrary patterns (with coefficients in $\ZZ$) in arbitrary abelian groups. However there must be some caveats related to infinite complexity patterns and trivial solutions. As such, we do not state a precise conjecture here. 

\section{Proof strategy}
\label{sec:summary}

We begin by recalling the basic strategy for proving the graph removal lemma. We are given an $n$-vertex graph $G$ with $H$-density $o(1)$ and we wish to make the graph $H$-free by removing $o(n^2)$ edges.

\smallskip
\noindent \textbf{Strategy for graph removal:}
\begin{enumerate}
    \item \textbf{Regularize.} Apply Szemer\'edi's regularity lemma to $G$, partitioning the vertex set into a small number of roughly equal-sized parts, $V(G)=V_1\cup\cdots\cup V_k$, in such a way that $G$ is regular (i.e., quasirandom) between almost all pairs of parts.
    \item \textbf{Clean up.} Modify $G$ by deleting all edges that lie between a pair of parts $(V_i,V_j)$ that either is irregular or has low edge-density.
\end{enumerate}
The resulting graph, after the edge deletions, should be $H$-free since if there were an $H$-subgraph it must sit among high-density regular pairs. By a counting lemma this would imply the existence of many copies of $H$ in the original graph $G$. This would contradict the assumption that $G$ has $H$-density $o(1)$.

For the induced graph removal lemma, the above strategy needs to be further extended, since removing all the edges between an irregular pair $(V_i,V_j)$ may actually create induced $H$-subgraphs where there were none. The new tool here is to use a strong regularity lemma to find a ``regular model'' for $G$ in which every pair of parts is regular.

\smallskip
\noindent \textbf{Strategy for induced graph removal:}
\begin{enumerate}
    \item \textbf{Find a regular model.} Apply a strong regularity lemma to $G$ to produce a partition of the vertex set into a small number of roughly equal-sized parts, $V(G)=V_1\cup\cdots\cup V_k$, and subsets $U_i\subseteq V_i$ each a positive fraction. This is done in such a way that $U:=U_1\cup\cdots\cup U_k$ forms a ``regular model'' for $G$ in the sense that $(U_i,U_j)$ is a regular pair for \emph{every} $i,j$ and that the edge densities between pairs of $V_i$'s and the corresponding pairs of $U_i$'s are close on average.
    \item \textbf{Clean up.} Modify $G$ by deleting all edges that lie between a pair $(V_i,V_j)$ if the edge-density between $(U_i,U_j)$ is close to 0 and by adding all edges that lie between a pair $(V_i,V_j)$ if the edge-density between $(U_i,U_j)$ is close to 1.
\end{enumerate}
The resulting graph, after the edge modifications, should be induced $H$-free for the following reason. Suppose that there were an induced $H$-subgraph sitting among some of the $V_i$'s. Then between the corresponding $U_i$'s, all the pairs are regular and all must have either high edge-density or high non-edge-density as required. By an appropriate counting lemma, this would produce many more induced copies of $H$ in $G$, contradicting the assumption that $G$ has induced $H$-density $o(1)$.

The arithmetic removal lemma is proved analogously to the graph removal lemma. Here a subset $S\subseteq V$ of a finite field vector space $V$ plays the role of a graph and a regular partition of $S\subseteq V$ is given by a subspace $V_1\leq V$ which has bounded codimension. This induces the partition of $V$ into cosets of $V_1$. For this partition to be regular we require $x+V_1$ to be regular (i.e., $S\cap(x+V_1)$ sits inside $x+V_1$ quasirandomly) for almost all $x\in V$.

Suppose $H$ is a complexity 1 system of linear forms over $\FF_q$ that has $o(1)$ density in $S\subseteq V$.

\smallskip
\noindent \textbf{Strategy for arithmetic removal:}
\begin{enumerate}
    \item \textbf{Regularize.} Apply Green's arithmetic regularity lemma to $S\subseteq V$, producing a regular partition determined by a subspace $V_1\leq V$ of bounded codimension.
    \item \textbf{Clean up.} Modify $S$ by deleting all elements that lie in a coset $x+V_1$ that either is irregular or has low density of $S$.
\end{enumerate}
The resulting set, after the deletions, should be $H$-free since if there were an $H$-instance it must sit in high-density regular cosets. By a counting lemma, this would imply the existence of many copies of $H$ in the original set $S$. This would contradict the assumption that $S$ has $H$-density $o(1)$.

A natural attempt to prove the induced arithmetic removal lemma would be to invoke a strong arithmetic regularity lemma to find a regular model. In the arithmetic setting a regular model is a subspace $W$ such that $W\cap (x+V_1)$ is regular for each $x$.

Suppose $H$ is a 2-colored pattern of complexity 1 that has $o(1)$ density in $S\subseteq V$. (Here we view $S$ and $V\setminus S$ as the two color classes.)

\smallskip
\noindent \textbf{(Incorrect) strategy for induced arithmetic removal:}
\begin{enumerate}
    \item \textbf{Find a regular model.} Apply a strong arithmetic regularity lemma to $S\subseteq V$ to produce a partition of $V$ into cosets of $V_1\leq V$ as well as a bounded codimension subspace $W\leq V$ that forms a ``regular model'' for $V$ in the following sense. We require that $W\cap(x+V_1)$ is a regular coset for \emph{every} $x$ and the densities of $S$ in the $x+V_1$'s and the corresponding $W\cap (x+V_1)$'s are close on average. 
    \item \textbf{Clean up.} Modify $S$ by deleting all elements that lie in a coset $x+V_1$ if the density of $S$ in $W\cap(x+V_1)$ is close to 0 and by adding all elements that lie in $x+V_1$ if the density of $S$ in $W\cap(x+V_1)$ is close 1.
\end{enumerate}
If it were possible to carry out this strategy, the resulting set would be induced $H$-free. However, an important difficulty arises that was not present in the proof of the induced graph removal lemma. It turns out that it is impossible to perform the ``find a regular model'' step as the subspace $W\cap V_1$ cannot always be made regular. Green and Sanders \cite{GS16} give an example with the following properties.

\begin{quote}
For all $n$, there exists a subset $S\subset\FF_3^n$ such that for every positive-dimension subspace $V\leq\FF_3^n$ the set $S\cap V$ is \emph{not} an $\epsilon$-regular subset of $V$ for any $\epsilon<\sqrt3/6$.
\end{quote}

Our strategy for circumventing this difficulty is two-fold. First we get as much out of arithmetic regularity as possible: we say that $W$ is a regular model if $W\cap(x+V_1)$ is regular for every $x\in V\setminus V_1$. Next we develop a new tool to deal with $V_1$. Our new tool is a Ramsey-type dichotomy which roughly says the following:
\begin{quote}
    For every finite set  $\cH$ of $2$-colored patterns over $\FF_q$, there exist constants $n_0$ and $\epsilon_0>0$ such that either:
    \begin{enumerate}[(a)]
        \item for every finite dimensional $\FF_q$-vector space $V$ satisfying $\dim V\geq n_0$, every $S\subseteq V$ has $H$-density at least $\epsilon_0$ for some $H\in\cH$; or
        \item for every finite dimensional $\FF_q$-vector space $V$ there exists $S\subseteq V$ that is $\cH$-free.
    \end{enumerate}
\end{quote}
In the above dichotomy we view $S$ and $V\setminus S$ as the two color classes.

Suppose $H$ is a 2-colored pattern of complexity 1 that has $o(1)$ density in $S\subseteq V$.

\smallskip
\noindent \textbf{(Revised) strategy for induced arithmetic removal:}
\begin{enumerate}
    \item \textbf{Find a regular model.} Apply a strong arithmetic regularity lemma to $S\subseteq V$ to produce a partition of $V$ into cosets of $V_1\leq V$ and a bounded codimension subspace $W\leq V$ that forms a ``regular model'' for $V$ in the following sense. We require that $W\cap(x+V_1)$ is a regular coset for \emph{every} $x\not\in V_1$ and the densities of $S$ in the $x+V_1$'s and the corresponding $W\cap (x+V_1)$'s are close on average. 
    \item \textbf{Clean up.} Modify $S$ by deleting all elements that lie in a coset $x+V_1$ if the density of $S$ in $W\cap(x+V_1)$ is close to 0 and by adding all elements that lie in $x+V_1$ if the density of $S$ in $W\cap(x+V_1)$ is close to 1.
    \item \textbf{Patch around the origin.} Since $S\subseteq V$ is a subset with $H$-density $o(1)$, we cannot be in case $(a)$ of the dichotomy above. Therefore there exists a subset $T\subseteq V_1$ that is $H$-free. Replace $S\cap V_1$ with $T$. (One actually needs a slightly more involved application of our Ramsey-type dichotomy.)
\end{enumerate}
Suppose that there is an $H$-instance sitting among some of the $x+V_1$'s. If none of these cosets contains the origin, then all the $W\cap(x+V_1)$ are regular and all either have high $S$-density or high $(V\setminus S)$-density as required. This is impossible since it would imply the existence of many $H$-instances by an appropriate counting lemma. It is also impossible for an $H$-instance to sit entirely in $V_1$ since $V_1$ was patched to be $H$-free. With a little more work we can also rule out the existence of $H$-instances which are contained partially in $V_1$ and partially in $V\setminus V_1$.

\section{Proof of main theorem}
\label{sec:main-thm}

In this section we prove the main theorem, modulo two key tools which are proved in Sections~\ref{sec:rado} and \ref{sec:regularity}. As described in Section~\ref{sec:summary} the proof has three steps: find a regular model; clean up; patch around the origin. We first give the precise definition of regularity. As is typical in this field we work in the weighted, or functional, setting where functions represent weighted colorings.

\begin{defn}
A function $f\colon V\to[-1,1]$ is \emph{$\epsilon$-regular} if $\|\wh{f-\E f}\|_\infty\leq\epsilon$. A coloring $\phi\colon V\to[r]$ is \emph{$\epsilon$-regular} if $1_{\phi^{-1}(i)}$ is $\epsilon$-regular for all $1\leq i\leq r$.
\end{defn}

The next proposition accomplishes the ``find a regular model'' step. As before, $V$ is partitioned into cosets as $V=\bigcup_{x\in U}(x+V_1)$. The regular model will consist of $U\oplus V_2$ where $U$ is a complement for $V_1$ in the sense that $U\oplus V_1=V$. Property (1) states that the model consists of a positive fraction of $V$, property (2) that the colorings of $x+V_1$ and $x+V_2$ have similar densities, and property (3) that $U\oplus V_2$ is regular. This proposition is proved in Section~\ref{sec:regularity} via an application of a strong arithmetic regularity lemma.

\begin{prop}[Regular model]
\label{thm:regularity}
For a finite dimensional $\FF_q$-vector space $V$, functions $f_1, \ldots, f_k \colon\allowbreak V \to [-1,1 ]$, a subspace $V_0\leq V$, and a parameter $\epsilon>0$, there is some $n_{\mathsf{reg'}}=n_{\mathsf{reg'}}(\epsilon,q,k, \codim V_0)$ such that there exist subspaces $V_2\leq V_1\leq V_0$ and a choice of complement $U$ satisfying $U\oplus V_1=V$ such that
\begin{enumerate}
\item $\codim V_2\leq n_{\mathsf{reg'}}$;
\item for all but an $\epsilon$-fraction of $x\in U$ we have\[|\E_{y\in V_1}[f_i(x+y)]-\E_{y\in V_2}[f_i(x+y)]|\leq\epsilon\]for all $1\leq i\leq k$;
\item for each $x\in U\setminus\{0\}$ and $1\leq i\leq k$, the function $f_i|_{x+V_2}$ is $\epsilon$-regular (meaning that the function $g_i\colon V_2\to[-1,1]$ defined by $g_i(y):=f_i(x+y)$ is $\epsilon$-regular).
\end{enumerate}
\end{prop}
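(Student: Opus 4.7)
The plan is to produce $V_2\leq V_1\leq V_0$ via a strong (sandwich-type) arithmetic regularity lemma, and then to locate a linear complement $U$ of $V_1$ probabilistically by a union bound. The quantitative trick is to set the ``inner'' regularity parameter of the strong lemma to decay rapidly in $\codim V_1$, so that a union bound over the $q^{\codim V_1}-1$ non-zero cosets in $V/V_1$ can still force every non-zero coset representative in $U$ to land in a Fourier-uniform $V_2$-sub-coset.

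First I would apply a strong arithmetic regularity lemma to the tuple $(f_1,\dots,f_k)$ over $V_0$, with outer tolerance $\epsilon'=\Theta(\epsilon^2)$ and inner growth function $\mathcal F\colon\NN\to(0,1)$ chosen so that $k\cdot\mathcal F(m)\cdot q^m\leq 1/2$ and $\mathcal F(m)\leq\epsilon$ for every $m$. This outputs $V_2\leq V_1\leq V_0$ with $\codim V_1$ bounded in terms of $\epsilon,q,k,\codim V_0$ and $\codim V_2\leq n_{\mathsf{reg'}}$ of tower-type. Writing $f_{i,V_j}(x):=\E_{y\in V_j} f_i(x+y)$, the lemma guarantees: (A) $\|f_{i,V_1}-f_{i,V_2}\|_2^2\leq\epsilon'$ for every $i$, which by Markov forces at most an $O(\epsilon/k)$-fraction of $V_2$-cosets to be \emph{average-bad} (i.e.,\ to have $|f_{i,V_1}-f_{i,V_2}|>\epsilon$ for some $i$); and (B) for each $i$, at most a $\mathcal F(\codim V_1)$-fraction of $V_2$-cosets are \emph{Fourier-bad} (meaning $f_i$ restricted to them fails to be $\mathcal F(\codim V_1)$-regular).

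Next, sample a uniformly random $\FF_q$-linear section $\alpha\colon V/V_1\to V$ of the natural projection (equivalently, lift a chosen basis of $V/V_1$ by picking each lift uniformly in its $V_1$-coset), and set $U:=\alpha(V/V_1)$, so $U\oplus V_1=V$. For each non-zero $c\in V/V_1$, the image $\alpha(c)$ is uniform in the coset $c+V_1$, hence $\alpha(c)+V_2$ is uniform among the $|V_1/V_2|$ sub-cosets of $c+V_1$. A union bound thus gives $\Pr[\exists c\neq 0:\alpha(c)+V_2\text{ is Fourier-bad}]\leq k\cdot\mathcal F(\codim V_1)\cdot q^{\codim V_1}\leq 1/2$. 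A separate Markov estimate, using that the expected fraction of $x\in U$ that are average-bad is at most the global average-bad fraction $O(\epsilon/k)$, shows that with probability at least $3/4$ at most an $\epsilon$-fraction of $x\in U$ is average-bad. Both events hold simultaneously with positive probability; fix such an $\alpha$. Then every $x\in U\setminus\{0\}$ has $x+V_2$ Fourier-uniform, giving condition~(3) (since $\mathcal F(\codim V_1)\leq\epsilon$); condition~(2) follows from the choice of $\alpha$; and condition~(1) is inherited from the codimension bound on $V_2$.

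The main obstacle is setting up the strong arithmetic regularity lemma in the correct joint form --- handling the $k$ functions simultaneously and extracting both the $L^2$-approximation of $V_1$- by $V_2$-coset averages at the outer scale and the pointwise Fourier-regularity on almost every $V_2$-coset at the inner scale. Once that lemma is in hand, the random-section union bound is routine, and the parameter inflation required to arrange $\mathcal F(m)\ll q^{-m}$ only enlarges the tower-type bound $n_{\mathsf{reg'}}$ without changing its qualitative character.
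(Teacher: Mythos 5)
Your approach matches the paper's first proof of Proposition~\ref{thm:regularity} almost exactly: apply a strong (iterated) arithmetic regularity lemma with an inner growth function decaying faster than $q^{-m}/(2k)$, then pick a uniformly random linear complement $U$ of $V_1$, and conclude via a union bound (over the $q^{\codim V_1}-1$ nonzero cosets, times $k$) for the Fourier-regularity condition together with a Markov estimate for the density-approximation condition. Two minor quantitative slips are worth flagging. First, your outer tolerance $\epsilon'=\Theta(\epsilon^2)$ is not small enough: Markov applied to $\|f_{i,V_1}-f_{i,V_2}\|_2^2\leq\epsilon'$ gives a bad-coset fraction of $\epsilon'/\epsilon^2$, so to reach the claimed $O(\epsilon/k)$ (or even just $O(\epsilon)$) you need $\epsilon'=O(\epsilon^3/k)$; the paper uses $\delta=\epsilon^3/4$ applied to the total energy increment $\sum_i\|\cdot\|_2^2$. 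Second, iterating Green's arithmetic regularity lemma as you (and the paper's first proof) do yields a \emph{wowzer}-type bound on $\codim V_2$, not a tower-type bound; the paper obtains tower-type bounds only via its second, decomposition-based proof. Neither slip affects the qualitative validity of the argument.
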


With Proposition~\ref{thm:regularity} in hand it is easy to accomplish the ``clean up'' step.

\begin{prop}[Regularity recoloring]
\label{thm:regularity-recoloring}
For a finite dimensional $\FF_q$-vector space $V$, a coloring $\phi\colon V\to[r]$, and parameters $0<\epsilon,\epsilon'\leq1$, there is some $n_{\mathsf{reg}}=n_{\mathsf{reg}}(\epsilon,\epsilon',q,r)$ such that there exist subspaces $V_2\leq V_1\leq V$, a complement $U$ satisfying $U\oplus V_1=V$, and a recoloring $\phi'\colon V\to[r]$ that agrees with $\phi$ on all but an at most $\epsilon|V|$ values satisfying:
\begin{enumerate}
\item $1/\epsilon \leq \codim V_1\leq \codim V_2\leq n_{\mathsf{reg}}$;
\item if a color appears in some coset $x+V_1$ under $\phi'$, then at least an $\epsilon/(2r)$-fraction of $x+V_2$ is that color under $\phi$;
\item for each $x\in U\setminus\{0\}$, the original coloring $\phi|_{x+V_2}$ is $\epsilon'$-regular.
\end{enumerate}
\end{prop}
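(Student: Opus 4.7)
The plan is to deduce this proposition from Proposition~\ref{thm:regularity} essentially by bookkeeping: apply the regular-model proposition to the indicator functions of the color classes, and then perform a coset-by-coset recoloring that kills only rare colors. Concretely, fix $V_0 \le V$ to be any subspace of codimension $\lceil 1/\epsilon \rceil$ (if $\dim V < 1/\epsilon$ the conclusion is trivial). Set an auxiliary parameter $\epsilon_1 := \min(\epsilon',\,\epsilon/(2(r+1)))$, and apply Proposition~\ref{thm:regularity} to $V_0$ with $k = r$ and $f_i := 1_{\phi^{-1}(i)}$ for $i \in [r]$, using regularity parameter $\epsilon_1$. This produces subspaces $V_2 \le V_1 \le V_0$, a complement $U$ with $U \oplus V_1 = V$, and the bound $\codim V_2 \le n_{\mathsf{reg'}}(\epsilon_1, q, r, \lceil 1/\epsilon\rceil)$, which I would take as $n_{\mathsf{reg}}$. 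Condition~(1) is then immediate since $\codim V_1 \ge \codim V_0 \ge 1/\epsilon$, and condition~(3) follows from Proposition~\ref{thm:regularity}(3) since $\epsilon_1 \le \epsilon'$.

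Next I would construct $\phi'$ separately on each coset $x + V_1$ for $x \in U$. Let $C_x := \{ i \in [r] : \E_{y \in V_2}[f_i(x+y)] \ge \epsilon/(2r) \}$ be the set of frequent colors in the model coset $x + V_2$. Since $\sum_{i \in [r]} \E_{y \in V_2}[f_i(x+y)] = 1$, pigeonhole forces $C_x \neq \emptyset$, so fix any representative $c_x \in C_x$. Define $\phi'(y) := \phi(y)$ if $\phi(y) \in C_x$ and $\phi'(y) := c_x$ otherwise. Condition~(2) then holds by construction, since every color that appears in $x + V_1$ under $\phi'$ lies in $C_x$.

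Finally I would bound the number of $y \in V$ on which $\phi$ and $\phi'$ disagree. For the at most $\epsilon_1$-fraction of ``bad'' cosets $x \in U$ violating Proposition~\ref{thm:regularity}(2), I would absorb the entire coset into the error, contributing at most $\epsilon_1|V|$ disagreements in total. For any ``good'' $x$ and any invalid color $i \notin C_x$, Proposition~\ref{thm:regularity}(2) gives $\E_{y \in V_1}[f_i(x+y)] \le \E_{y \in V_2}[f_i(x+y)] + \epsilon_1 < \epsilon/(2r) + \epsilon_1$; summing over the at most $r$ invalid colors, the disagreement rate inside $x + V_1$ is at most $\epsilon/2 + r\epsilon_1$. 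Adding these contributions, the total number of disagreements is at most $(\epsilon_1 + \epsilon/2 + r\epsilon_1)|V| \le \epsilon|V|$ by the choice of $\epsilon_1$.

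There is no real obstacle in this argument: all of the extremal-combinatorial content has been packaged into Proposition~\ref{thm:regularity}, and the present proposition is essentially a careful repackaging of it. The only subtleties to watch are (a) starting with $V_0$ of codimension at least $1/\epsilon$, so that the lower bound in condition~(1) comes for free from the nesting $V_1 \le V_0$, and (b) choosing $\epsilon_1$ on the order of $\epsilon/r$, so that both the exceptional fraction of cosets in Proposition~\ref{thm:regularity}(2) and the aggregate rare-color mass inside the good cosets stay strictly below the recoloring budget $\epsilon|V|$.
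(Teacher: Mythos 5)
Your proof is correct and follows essentially the same approach as the paper's: apply Proposition~\ref{thm:regularity} to the indicator functions with $V_0$ of codimension $\lceil 1/\epsilon\rceil$, then recolor rare colors in each $V_1$-coset to a frequent color of the corresponding $V_2$-coset, bounding the disagreements by separating bad cosets from good ones. The only difference is a cosmetically different choice of auxiliary parameter ($\epsilon/(2(r+1))$ versus the paper's $\epsilon/(4r)$), which leads to the same conclusion.
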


\begin{proof}
For $1\leq i\leq r$, define $f_i\colon V\to[-1,1]$ by $f_i=1_{\phi^{-1}(i)}$. Let $V_0$ be an arbitrary subspace of codimension $\lceil1/\epsilon\rceil$. Applying Proposition~\ref{thm:regularity} with parameter $\epsilon''=\min(\epsilon/(4r),\epsilon')$ gives subspaces $V_2\leq V_1\leq V_0$ and a choice of complement $U$ satisfying $U\oplus V_1=V$ with the following properties:
\begin{enumerate}[(1$^\prime$)]
\item $\codim V_2\leq n_{\mathsf{reg'}}(\epsilon'',q,r,\lceil 1/\epsilon\rceil)$;
\item for all but an $\epsilon/(4r)$-fraction of $x\in U$ we have\[|\E_{y\in V_1}[f_i(x+y)]-\E_{y\in V_2}[f_i(x+y)]|\leq\frac{\epsilon}{4r}\]for all $1\leq i\leq r$;
\item for each $x\in U\setminus\{0\}$ and $1\leq i\leq r$, the function $f_i|_{x+V_2}$ is $\epsilon'$-regular.
\end{enumerate}

Since $V_1\leq V_0$, clearly $\codim V_1\geq\codim V_0\geq 1/\epsilon$. This proves part (1). Part (3) follows from (3$^\prime$). Next we produce a recoloring $\phi'\colon V\to[r]$ that satisfies part (2). For each $x\in U$, there is some color $i_x$ that appears with density at least $\epsilon/(2r)$ in $x+V_2$. For each color $i$ which appears with density less than $\epsilon/(2r)$ in $x+V_2$, recolor every element of color $i$ in $x+V_1$ to color $i_x$. This recoloring satisfies part (2).

All that remains is to show that $\phi$ and $\phi'$ agree on all but an $\epsilon$-fraction of $V$. At least a $(1-\epsilon/(4r))$-fraction of $x\in U$ satisfy
\begin{equation}
\label{eq:close-density}
|\E_{y\in V_1}[f_i(x+y)]-\E_{y\in V_2}[f_i(x+y)]|\leq\frac{\epsilon}{4r}\quad\text{for all }1\leq i\leq r.\tag{$\ast$}
\end{equation}
For the $x\in U$ failing to satisfy (\ref{eq:close-density}), we use the trivial bound: the number of elements of $x+V_1$ that we recolor is at most all $|V_1|$ of them. For the rest of the $x\in U$, for each color $i$ which occurs with density at most $\epsilon/(2r)$ in $x+V_2$, color $i$ has density at most $\epsilon/(2r)+\epsilon/(4r)$ in $x+V_1$. Thus for each $x\in U$ satisfying (\ref{eq:close-density}) we recolor at most a $3\epsilon/4$-fraction of $x+V_1$. Combining these two cases, $\phi'$ and $\phi$ agree on all but at most a $3\epsilon/4+\epsilon/(4r)\leq\epsilon$-fraction of $V$.
\end{proof}

To accomplish the ``patch around the origin'' step we need the following dichotomy which is proved in Section~\ref{sec:rado}.

\begin{prop}[Ramsey dichotomy]
\label{thm:rado-dichotomy}
Fix a finite set $\cH$ of $r$-colored patterns over $\FF_q$. There exist $n_{\mathsf{rado}}=n_{\mathsf{rado}}(\cH)$ and $\epsilon_{\mathsf{rado}}=\epsilon_{\mathsf{rado}}(\cH)>0$ such that either:
\begin{enumerate}[(a)]
    \item every coloring $\phi\colon V\to[r]$ of $V$, a finite dimensional $\FF_q$-vector space with $\dim V\geq n_{\mathsf{rado}}$, has $H$-density at least $\epsilon_{\mathsf{rado}}$ for some $H\in\cH$; or
    \item for every finite dimensional $\FF_q$-vector space $V$ there exists a coloring $\phi\colon V\setminus\{0\}\to[r]$ that is $\cH$-free.
\end{enumerate}
\end{prop}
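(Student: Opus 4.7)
I would prove the proposition by contraposition: assume (b) fails and deduce (a). The negation of (b) produces a vector space $V^{*} = \FF_q^{m^{*}}$ on which no $\cH$-free coloring of $V^{*} \setminus \{0\}$ exists, so every such coloring has some $H$-instance $\bm{x} \in (V^{*} \setminus \{0\})^k$ for some $H \in \cH$. Since restricting an $\cH$-free coloring to any subspace remains $\cH$-free, this property is monotone in dimension, so every space of dimension at least $m^{*}$ also fails to admit an $\cH$-free coloring.

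The core of the proof is a supersaturation step via double counting injective linear embeddings. Given $\phi \colon V \to [r]$ with $\dim V \gg m^{*}$, consider the $\Theta(|V|^{m^{*}})$ injective linear maps $\iota \colon V^{*} \hookrightarrow V$; each pullback $\phi \circ \iota$ is forced by the hypothesis to contain some $H$-instance $\bm{x}_\iota \in (V^{*} \setminus \{0\})^k$ for some $H_\iota \in \cH$, and its lift $\bm{y}_\iota := \iota(\bm{x}_\iota)$ is an $H_\iota$-instance of $\phi$ in $(V \setminus \{0\})^k$. A fixed target $(H, \bm{y}) \in \cH \times V^k$ with $\dim \spn\{y_i\} = d$ is realized by at most $O(|V|^{m^{*} - d})$ embeddings, since $\iota$ is determined on $\spn\{x_i\} \cong \spn\{y_i\}$ and otherwise free on a complement. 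Summing yields a lower bound of $\Omega(|V|^d)$ on the number of distinct lifted $H$-instances of $\phi$ in $V$ for some attained $d$.

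The main obstacle is that $H$-density is dominated by \emph{generic} instances (those with $d = k - \rank A$), while the chosen $\bm{x}_\iota$ need not be generic. To address this, I would enlarge $\cH$ to its derived closure $\tilde{\cH} := \{(A', \psi) : (A, \psi) \in \cH,\ \operatorname{rowspace}(A') \supseteq \operatorname{rowspace}(A)\}$, which is finite over $\FF_q$ and satisfies $\cH$-freeness if and only if $\tilde{\cH}$-freeness (since each non-generic $H$-instance is a generic $H'$-instance for some derived $H'$). I would then induct on the ``freedom parameter'' $f(\cH) := \max_{H \in \cH}(k - \rank A)$: the base case $f(\cH) = 0$ is vacuous because $\rank A = k$ forces $\bm{x} = \bm{0}$; in the inductive step, pigeonhole over embeddings gives either (i) a positive fraction producing generic $H$-instances for some $H \in \cH$, in which case the double counting above immediately delivers (a), or (ii) most producing only derived instances, reducing the analysis to a pattern collection of strictly smaller freedom to which the induction hypothesis applies.

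The most delicate step will be ensuring the induction closes, since the derived collection is not $\cH$ itself: after the induction produces a density lower bound for some derived $H' \in \tilde{\cH}$, one must transfer this back to a bound for the original $H$, which is nontrivial because $H$-density and $H'$-density use different normalizations $|V|^{k - \rank A}$ versus $|V|^{k - \rank A'}$. A potentially cleaner alternative is to strengthen $\neg$(b) via a preliminary Ramsey-type argument: show that if $V^{*}$ admits no $\cH$-free coloring, then some (possibly much larger) $V^{**}$ has the stronger property that every coloring of $V^{**} \setminus \{0\}$ contains a \emph{generic} $H$-instance for some $H \in \cH$, after which the double counting argument immediately yields (a) without any need for induction.
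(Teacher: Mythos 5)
Your double-counting skeleton is correct and matches the paper's: the paper averages over dimension-$n_{\mathsf{rado}}$ subspaces (equivalently, over embeddings as you propose) and observes that generic instances are counted a controlled number of times. You also correctly identify the central obstacle, namely that the instances guaranteed by $\neg$(b) need not be generic, and that without genericity the double count gives nothing useful.

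However, your primary route (i), the induction over the derived closure $\tilde\cH$ with the ``freedom parameter,'' does not close, for exactly the reason you flag at the end but do not resolve. If $H' = (A',\psi)$ is derived from $H = (A,\psi)$ by adding constraints ($\rank A' > \rank A$), then an $\epsilon$-density of $H'$-instances gives only $\epsilon |V|^{k-\rank A'}$ instances of $H$, i.e.\ $H$-density $\epsilon |V|^{\rank A - \rank A'} \to 0$. So a density bound for a derived pattern in $\tilde\cH$ is strictly weaker than a density bound for the original pattern in $\cH$, and the conclusion (a), which is stated for $\cH$, is not recovered. The inductive step in case (ii) thus establishes a statement about a different pattern collection that cannot be transferred back, and the induction does not produce (a).

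Your alternative route (ii) is the correct one and is what the paper actually does, but it is left as a one-sentence wish; the entire difficulty of the proposition lives there. The paper's Lemma~\ref{thm:rado} carries out this ``preliminary Ramsey-type argument'' by (1) associating to an arbitrary coloring $\phi\colon V\setminus\{0\}\to[r]$ a derived $r^{q-1}$-coloring constant on one-dimensional subspaces, (2) invoking the Graham--Rothschild vector space Ramsey theorem (Theorem~\ref{thm:gr}) to find a $2k$-dimensional subspace on which this derived coloring is constant, (3) showing that on that subspace $\phi$ agrees with a \emph{canonical coloring} $\Phi_{2k,\chi}$ (determined by a single map $\chi\colon\FF_q\setminus\{0\}\to[r]$), and then (4) locating an $H$-instance in the first $k$ coordinates of the canonical coloring and perturbing it by a generic solution supported on the last $k$ coordinates, which preserves the colors because the perturbation does not change the first nonzero coordinate. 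Steps (2)--(4) — in particular the appeal to Graham--Rothschild and the reduction to canonical colorings, which is where the Rado-type ``columns condition'' flavor enters — are the essential content, and none of it appears in your sketch. As written, the proposal identifies the right target but leaves the key Ramsey input unspecified, so it does not constitute a proof.
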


As we saw in the previous discussion, Proposition~\ref{thm:regularity} and Proposition~\ref{thm:regularity-recoloring} allow us to recolor $V\setminus V_1$ to be $\cH$-free. Furthermore, applying Proposition~\ref{thm:rado-dichotomy} to $\cH$ allows us to recolor $V_1\setminus\{0\}$ to be $\cH$-free. However this does not rule out $H$-instances which are contained partially in $V_1\setminus\{0\}$ and partially in $V\setminus V_1$.

To deal with this problem we apply Proposition~\ref{thm:rado-dichotomy} not only to $\cH$ but also to some of the \emph{subpatterns} of the patterns $H\in\cH$. The subpatterns of an arithmetic pattern $H$ are analogous to the subgraphs of a graph $H$.

\begin{defn}
Let $H=(A,\psi)$ be an $r$-colored pattern where $A$ is an $\ell\times k$ matrix with entries in $\FF_q$. For $I\subseteq[k]$ say that $H'=(A',\psi|_I)$ is a \emph{subpattern of $H$ restricted to the variables $I$} if for all $V$, a finite dimensional $\FF_q$-vector space, it holds that for $\bx\in V ^I$ we have $A'\bm x=\bm 0$ if and only if there exists $\bm y\in V^k$ satisfying $A\bm y=\bm 0$ and $x_i=y_i$ for all $i\in I$. (Here $\psi|_I\colon I\to[r]$ is the restriction of $\psi$ to $I$.)
\end{defn}

\begin{example}
Consider the following pattern which represents a red 4-term arithmetic progression:
\[
	H=\paren{A = \begin{pmatrix}
		1 & -2 & 1 & 0 \\
		0 & 1 & -2 & 1
	\end{pmatrix}, \psi \equiv \text{red}
	}.
\]	
The subpattern of $H$ corresponding to the first three variables (i.e., $I=\{1,2,3\}$) is a red 3-term arithmetic progression. One possible representation is as:
\[
    H'=\paren{A' = \begin{pmatrix}
		1 & -2 & 1
	\end{pmatrix}, \psi \equiv \text{red}
	}.
\]
\end{example}

\begin{example}
Consider the matrix \[A=\begin{pmatrix}1&1&1&0&0\\0&0&1&1&1\end{pmatrix}.\]The equation $A\bx=\bm0$ imposes the constraints $x_1+x_2+x_3=0$ and $x_3+x_4+x_5=0$. Restricting to variables $x_1,x_2,x_3$ (i.e., $I=\{1,2,3\}$), the only constraint is $x_1+x_2+x_3=0$ which may be represented by the matrix \[A'=\begin{pmatrix} 1&1&1\end{pmatrix}.\] Similarly, restricting to variables $x_1,x_2,x_4,x_5$ (i.e., $I=\{1,2,4,5\}$), the only constraint is $x_1+x_2-x_4-x_5=0$ which may be represented by the matrix\[A'=\begin{pmatrix}1&1&-1&-1\end{pmatrix}.\]
\end{example}

Let $H=(A,\psi)$ be an $r$-colored pattern where $A$ is an $\ell\times k$ matrix with entries in $\FF_q$. Suppose $H'=(A',\psi|_{[j]})$ is a subpattern of $H$ restricted to the first $j$ variables. Our definitions are such that for every finite dimensional $\FF_q$-vector space $V$ and $f_1,\ldots,f_k\colon V\to[-1,1]$, one has\[\Lambda_A(f_1,\ldots,f_j,1,\ldots,1)=\Lambda_{A'}(f_1,\ldots,f_j).\]

\begin{proof}[Proof of Theorem~\ref{thm:main}]
We are given $\mathcal H$, a finite set of $r$-colored complexity 1 patterns over $\FF_q$, and $\phi\colon V\to[r]$, an $r$-coloring of a finite dimensional $\FF_q$-vector space.

Let $\tilde\cH$ be a finite set containing one representative of each subpattern of each $H\in\cH$. More precisely, for each $H=(A,\psi)\in\cH$, a pattern where $A$ is an $\ell\times k$ matrix, and each $I\subseteq[k]$, the set $\tilde \cH$ contains some $H'=(A',\psi|_I)$, a subpattern of $H$ restricted to variables $I$. Define constants\[\epsilon_{\mathsf{rado}}=\min_{\cH'\subseteq\tilde\cH}\epsilon_{\mathsf{rado}}(\cH')\quad\text{and}\quad n_{\mathsf{rado}}=\max_{\cH'\subseteq\tilde\cH}n_{\mathsf{rado}}(\cH').\]
Write $k_{\mathsf{max}}$ for the maximum number of columns in any of the patterns $H\in\cH$. With foresight, we define \[\epsilon_{\mathsf{count}}=\frac{1}{2k_\mathsf{max}}\left(\frac{\epsilon}{4r}\right)^{k_\mathsf{max}}\epsilon_{\mathsf{rado}}.\] Define $\epsilon_{\mathsf{reg}}$ such that for all $H=(A,\psi)\in\tilde\cH$ and for all $f_1,\ldots,f_k\colon V\to[-1,1]$ we have
\begin{equation}
\label{eq:complexity-1}
|\Lambda_{A}(f_1,\ldots,f_k)|\leq\epsilon_{\mathsf{count}}\quad \text{whenever} \quad  \min_{1\leq i\leq k}\|\wh f_i\|_\infty\leq\epsilon_{\mathsf{reg}}.\tag{$\dagger$}
\end{equation}
We can do this since all $H\in\cH$ are complexity 1 (and this implies that all $H\in\tilde\cH$ are complexity 1). Define $n_0=n_{\mathsf{rado}}+n_{\mathsf{reg}}(\epsilon/2,\epsilon_{\mathsf{reg}},q,r)$. Finally define
\[\delta=\min\left(\frac{1}{4}q^{-k_\mathsf{max}\cdot n_{\mathsf{reg}}(\epsilon/2,\epsilon_{\mathsf{reg}},q,r)}\left(\frac{\epsilon}{4r}\right)^{k_\mathsf{max}}\epsilon_{\mathsf{rado}},q^{-n_0k_{\mathsf{max}}}\right).\]

First note that the desired result is trivially true for $\dim V<n_0$; if a pattern $H$ with at most $k_{\mathsf{max}}$ columns has density at most $\delta\leq q^{-n_0k_\mathsf{max}}$ in $\phi\colon V\to[r]$, then $\phi$ must have fewer than one $H$-instance, i.e., be $H$-free.

Now assume that $\dim V\geq n_0$. With these constants chosen we move on to the recoloring algorithm. First, we apply Proposition~\ref{thm:regularity-recoloring} to $\phi$ with parameters $\epsilon/2,\epsilon_{\mathsf{reg}}$. This produces $V_2\leq V_1\leq V$ and $U$ satisfying $U\oplus V_1=V$ and a recoloring $\phi'\colon V\setminus V_1\to[r]$ that differs from $\phi$ on at most $(\epsilon/2)|V|$ elements that satisfies:
\begin{enumerate}
\item $2/\epsilon \leq \codim V_1\leq \codim V_2\leq n_{\mathsf{reg}}(\epsilon/2,\epsilon_{\mathsf{reg}},q,r)$;
\item if a color appears in some coset $x+V_1$ under $\phi'$, then at least an $\epsilon/(4r)$-fraction of $x+V_2$ is that color under $\phi$;
\item for each $x\in U\setminus\{0\}$, the original coloring $\phi|_{x+V_2}$ is $\epsilon_{\mathsf{reg}}$-regular.
\end{enumerate}

Second, define $\cH'\subseteq\tilde\cH$ to be the set of all patterns $H\in\tilde\cH$ which appear with density less than $\epsilon_{\mathsf{rado}}$ in $\phi|_{V_2}$. Note that we have $\dim V_2\geq n_{\mathsf{rado}}$. We now apply Proposition~\ref{thm:rado-dichotomy}; by the definition of $\cH'$, we cannot be in case (a), since $\phi|_{V_2}$ is a counterexample. Thus we are in case (b) which implies the existence coloring $\phi'\colon V_1\setminus\{0\}\to[r]$ that is $\cH'$-free.

Combining these two results gives a recoloring $\phi'\colon V\setminus\{0\}\to[r]$. First note that $\phi'$ differs from $\phi$ on at most $(\epsilon/2)|V|+q^{-\codim V_1}|V|\leq\epsilon|V|$ elements of $V$. Now suppose that $\bx$ is an $H$-instance in $\phi'$ for some $(A,\psi)=H\in\cH$. Write $\bx=\bm u+\bm v$ with $\bm u\in U^k$ and $\bm v\in V_1^k$. We will prove that this implies that the $H$-density is large in $\phi$; we do this by counting $H$-instances of the form $\bm u+\bm y$ with $\bm u$ fixed as above and $\bm y\in V_2^k$. 

Relabeling variables as needed, assume that $u_1,\ldots,u_j=0$ and $u_{j+1},\ldots,u_k\neq 0$. Let $H'=(A',\psi|_{[j]})\in\tilde\cH$ be a subpattern of $H$ restricted to the first $j$ variables. The existence of $\bx$ means that for $i=j+1,\ldots,k$, the color $\psi(i)$ appears at least once in $u_i+V_1$ under $\phi'$ and it means that there exists an $H'$-instance in $\phi'|_{V_1}$, namely $(x_1,\ldots,x_j)$. By property (2) and the $\cH'$-freeness of $\phi'|_{V_1}$, this implies that for $i=j+1,\ldots,k$, the density of color $\psi(i)$ in $u_i+V_2$ under $\phi$ is at least $\epsilon/(4r)$ and that the $H'$-density in $\phi|_{V_2}$ is at least $\epsilon_{\mathsf{rado}}$.

Define $f_i\colon V\to[0,1]$ by $f_i(x)=1_{\phi^{-1}(\psi(i))}(x)$ and $g_i\colon V_2\to[0,1]$ by $g_i(x)=1_{\phi^{-1}(\psi(i))}(x+u_i)$. In this notation, the conclusions of the last paragraph are that $\E g_i\geq \epsilon/(4r)$ for $i=j+1,\ldots,k$ and $\Lambda_{A'}(g_1,\ldots,g_j)\geq \epsilon_{\mathsf{rado}}$. Finally we compute
\begin{align*}
\Lambda_A(f_1,\ldots,f_k)
&\geq q^{-(k-\rank A)\cdot\codim V_2}\Lambda_A(g_1,\ldots,g_k)\\
&\geq q^{-(k-\rank A)\cdot\codim V_2}\left(\Lambda_A(g_1,\ldots,g_j,\E g_{j+1},\ldots, \E g_k)-(k-j)\epsilon_{\mathsf{count}}\right)\\
&\geq q^{-(k-\rank A)\cdot\codim V_2}\left(\left(\frac{\epsilon}{4r}\right)^{k-j}\Lambda_{A'}(g_1,\ldots,g_j)-(k-j)\epsilon_{\mathsf{count}}\right)\\
&\geq q^{-k\cdot\codim V_2}\left(\left(\frac{\epsilon}{4r}\right)^k\epsilon_{\mathsf{rado}}-k\epsilon_{\mathsf{count}}\right)\\
&>\delta.
\end{align*}
The first line represents restricting from arbitrary $H$-instances to $H$-instances of the form $\bm u+\bm y$ with $\bm y\in V_2^k$ (note that the $f_i$ are non-negative). The second line follows by iterating (\ref{eq:complexity-1}). The last line follows from our choice of $\epsilon_{\mathsf{count}}$ and $\delta$.

Thus if the above recoloring procedure does not produce an $\cH$-free coloring, then $\Lambda_A(f_1,\ldots,f_k)>\delta$, meaning that the original coloring had $H$-density more than $\delta$ for some $H\in\cH$. This completes the proof.
\end{proof}

\section{Proof of Proposition~\ref{thm:rado-dichotomy}: a density Ramsey dichotomy}
\label{sec:rado}

In this section we prove Proposition~\ref{thm:rado-dichotomy}. The proof is based on the following trivial dichotomy:
\begin{quote}
    For any $\cH$, a finite set of $r$-colored patterns over $\FF_q$, there exists a constant $n_0$ such that either:
    \begin{enumerate}[(a)]
        \item every $r$-coloring of every $V$ with $\dim V\geq n_0$ contains an $H$-instance for some $H\in\cH$; or
        \item for every finite dimensional $\FF_q$-vector space $V$, there exists an $r$-coloring of $V$ that is $\cH$-free.
    \end{enumerate}
\end{quote}

To boost this result to the full strength of Proposition~\ref{thm:rado-dichotomy} we use a sampling argument. The idea of the sampling is simple. Suppose we have some $\cH$ that is in case (a). Then for an $r$-coloring of $V$ we know that there is not just one $H$-instance, but one $H$-instance in every dimension $n_0$ subspace of $V$. Adding up all of these gives many $H$-instances in $V$. Unfortunately we have overcounted each $H$-instance many times. Double counting more carefully gives us the desired result, but only if the original $H$-instances are \emph{generic} in the following sense.

\begin{defn}
For $A$ an $\ell\times k$ matrix with entries in $\FF_q$ and $\bx=(x_1,\ldots,x_k)\in V^k$ satisfying $A\bx=\bm0$, say that $\bx$ is a \emph{generic solution} if $\dim(\spn\{x_1,\ldots,x_k\})=k-\rank A$. For $H=(A,\psi)$ a colored pattern, define a \emph{generic $H$-instance} to be an $H$-instance $\bx$ that is a generic solution to $A\bx=\bm0$.
\end{defn}

To prove Proposition~\ref{thm:rado-dichotomy} we first show that we can always turn an $H$-instance into a generic $H$-instance, then we perform the sampling argument. 

The first step relies on an argument which is inspired by elements of the proof of Rado's theorem (see \cite{D75}). We reduce the question of whether $\cH$ lies in case (a) of Proposition~\ref{thm:rado-dichotomy} to a finite check. Instead of checking if there exists an $\cH$-instance in every coloring we only need to check the so-called \emph{canonical colorings}. (With some manipulations this check can be made to resemble the ``columns condition'' of Rado's theorem.)

\begin{defn}
For $\chi\colon\FF_q\setminus\{0\}\to[r]$ and $n>0$, define the \emph{$\chi$-canonical coloring} $\Phi_{n,\chi}\colon\FF_q^n\setminus\{0\}\to[r]$ by $\Phi_{n,\chi}(x):=\chi(\fnz(x))$ where $\fnz\colon\FF_q^n\setminus\{0\}\to\FF_q\setminus\{0\}$ maps a vector to its first non-zero coordinate.
\end{defn}

\begin{lemma}
\label{thm:rado}
For $\cH$, a finite set of $r$-colored patterns over $\FF_q$, write $k$ for the maximum number of columns in any of the patterns $H\in\cH$. There exists some $n_{\mathsf{rado}}=n_{\mathsf{rado}}(\cH)$ such that for any $n_0\geq n_{\mathsf{rado}}$ the following are equivalent:
\begin{enumerate}[(a)]
\item For every finite dimensional $\FF_q$-vector space $V$ satisfying $\dim V\geq n_0$ and every coloring $\phi\colon V\setminus\{0\}\to[r]$, there is an $H$-instance in $\phi$ for some $H\in\cH$.
\item For every $\chi\colon\FF_q\setminus\{0\}\to[r]$, there is an $H$-instance in $\Phi_{k,\chi}$ for some $H\in\cH$.
\item For every finite dimensional $\FF_q$-vector space $V$ satisfying $\dim V\geq n_{\mathsf{rado}}$ and every coloring $\phi\colon V\setminus\{0\}\to[r]$, there is a generic $H$-instance in $\phi$ for some $H\in\cH$.
\end{enumerate}
\end{lemma}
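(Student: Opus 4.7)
The plan is to establish the cyclic chain $(c)\Rightarrow(a)\Rightarrow(b)\Rightarrow(c)$; the first is trivial since any generic $H$-instance is in particular an $H$-instance. For $(a)\Rightarrow(b)$, fix $\chi\colon\FF_q\setminus\{0\}\to[r]$ and apply (a) to $\Phi_{n_0,\chi}$ on $V=\FF_q^{n_0}$, obtaining an $H$-instance $(x_1,\ldots,x_{k_H})\in(\FF_q^{n_0})^{k_H}$ for some $H=(A,\psi)\in\cH$ (with $k_H\le k$ columns). To compress into $\FF_q^k$, let $S\subseteq[n_0]$ be the set of $\fnz$-positions of the $x_i$'s (so $|S|\le k_H$), enumerate $S=\{s_1<\cdots<s_{|S|}\}$, and define $\pi\colon\FF_q^{n_0}\to\FF_q^k$ by $v\mapsto(v_{s_1},\ldots,v_{s_{|S|}},0,\ldots,0)$. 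Linearity gives $A\pi(x)=\pi(Ax)=0$; and since every coordinate of $x_i$ strictly before $\fnz(x_i)$ vanishes while $\fnz(x_i)$'s position lies in $S$, the first nonzero coordinate of $\pi(x_i)$ retains the value $\fnz(x_i)$, so $(\pi(x_1),\ldots,\pi(x_{k_H}))$ is an $H$-instance in $\Phi_{k,\chi}$.

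The substance is $(b)\Rightarrow(c)$, which I would split into two steps. \emph{Step 1 (genericity upgrade inside canonical colorings):} if $\Phi_{k,\chi}$ has an $H=(A,\psi)$-instance with $k_H$ columns, then $\Phi_{m,\chi}$ has a generic one for every $m\ge k+k_H$. Embed $x_1,\ldots,x_{k_H}$ into the first $k$ coordinates of $\FF_q^m$, pick a solution $v=(v_1,\ldots,v_{k_H})$ of $Av=0$ with each $v_i$ supported on the last $m-k$ coordinates, and set $y_i:=x_i+v_i$. Then $Ay=0$ and $\fnz(y_i)=\fnz(x_i)$, since the first nonzero coordinate still lies in the first block. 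Any $\alpha\in\FF_q^{k_H}$ with $\sum_i\alpha_iy_i=0$ forces $\sum_i\alpha_ix_i=0$ and $\sum_i\alpha_iv_i=0$ separately (disjoint blocks); the row space $\mathrm{row}(A)$ always sits in both left annihilators, and for generic $v\in\ker A$ it coincides with the left annihilator of $(v_i)$ exactly (using $m-k\ge k_H-\rank A$). Hence the intersection has dimension $\rank A$, giving $\dim\spn(y_1,\ldots,y_{k_H})=k_H-\rank A$ as required.

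\emph{Step 2 (canonical substructures via Ramsey):} a Gallai-type theorem for $\FF_q$-vector spaces, stating that for every $r,q,M$ there exists $N=N(r,q,M)$ such that every $r$-coloring of $\FF_q^N\setminus\{0\}$ contains an $M$-dimensional linear subspace on which the coloring, under some linear isomorphism with $\FF_q^M$, agrees with a canonical coloring $\Phi_{M,\chi}$. Setting $n_{\mathsf{rado}}:=N(r,q,2k)$, any $\phi\colon V\setminus\{0\}\to[r]$ with $\dim V\ge n_{\mathsf{rado}}$ contains a canonically colored $2k$-dimensional subspace; (b) yields an $H$-instance in $\Phi_{k,\chi}$ sitting inside, Step 1 upgrades it to a generic $H$-instance, and the result is a generic $H$-instance in $V$. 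The hard part will be Step 2; I would try iterated pigeonhole on ``line color profiles'' (each one-dimensional subspace has one of at most $r^{q-1}$ profiles, and a standard color-focussing/Ramsey argument should isolate a large-dimensional subspace on which all line-profiles cohere with a single $\chi$), or else invoke a Graham--Leeb--Rothschild-style parameter-word theorem over $\FF_q$.
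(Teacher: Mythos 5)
Your proposal is correct and follows essentially the same route as the paper's proof: the same projection argument for $(a)\Rightarrow(b)$, and the same two-step structure for $(b)\Rightarrow(c)$ (genericity upgrade inside canonical colorings, then a Ramsey step to find a canonically colored $2k$-dimensional subspace), with the same value of $n_{\mathsf{rado}}$. Your left-annihilator argument in Step~1 is a correct but slightly more elaborate version of the paper's simpler observation that projecting $\spn(y_1,\ldots,y_{k_H})$ onto the second block gives $\spn(v_1,\ldots,v_{k_H})$, forcing $\dim\spn(y_i)\geq k_H-\rank A$. For Step~2, the first suggestion (iterated pigeonhole on line-color profiles) would not by itself produce a large monochromatic subspace in the $r^{q-1}$-coloring of lines --- that is exactly the content of the Graham--Rothschild vector space Ramsey theorem (Theorem~\ref{thm:gr}), which is what the paper uses and which matches your fallback Graham--Leeb--Rothschild suggestion.
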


Note that in the above lemma statement, an $H$-instance always refers to $\bx$ where none of the vectors $x_i$ are 0 since all of the colorings considered leave 0 uncolored. We will define the function $n_{\mathsf{rado}}(\cH)$ in the statement of Proposition~\ref{thm:rado-dichotomy} to be the same as the $n_{\mathsf{rado}}(\cH)$ in this lemma.

We use the following vector space Ramsey theorem of Graham and Rothschild which was proved with primitive-recursive bounds by Shelah.

\begin{thm}[Graham-Rothschild \cite{GR71}, Shelah \cite{Sh88}\footnote{Note that both of these references prove a slight variant of this result, the ``affine Ramsey theorem''. Spencer \cite{Sp79} gives an easy deduction of this result, referred to as the ``vector space Ramsey theorem'', from the affine Ramsey theorem.}]
There exists $n_{\mathsf{GR}}=n_{\mathsf{GR}}(q,r,k)$ such that for a finite dimensional $\FF_q$-vector space $V$ with $\dim V\geq n_{\mathsf{GR}}$ and $\phi\colon V\setminus\{0\}\to[r]$, an $r$-coloring that is constant on one-dimensional subspaces, there exists a $k$-dimensional subspace $U\leq V$ such that $U\setminus\{0\}$ is monochromatic under $\phi$.
\label{thm:gr}
\end{thm}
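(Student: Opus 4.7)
The plan is to prove the theorem by induction on $k$, reducing the extension step to the Hales--Jewett theorem, in the spirit of the Graham--Rothschild argument.

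The base case $k=1$ is trivial: since $\phi$ is constant on $1$-dimensional subspaces, any nonzero $v\in V$ yields a monochromatic $\langle v\rangle\setminus\{0\}$, so $n_{\mathsf{GR}}(q,r,1):=1$ suffices.

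For the inductive step, assume that $n':=n_{\mathsf{GR}}(q,r,k-1)$ works for $(k-1)$-dimensional monochromatic subspaces, and take $V=\FF_q^N$ with $N$ much larger than $n'$. Applying the inductive hypothesis produces a monochromatic $(k-1)$-dim subspace $W\leq V$ of some color $c$. The key observation is that enlarging $W$ to a monochromatic $k$-dim subspace $\langle W,v\rangle$ reduces to finding $v\in V\setminus W$ satisfying $\phi(v+w)=c$ for every $w\in W$: then the constant-on-lines hypothesis yields $\phi(\lambda v+w)=\phi(\lambda(v+\lambda^{-1}w))=\phi(v+\lambda^{-1}w)=c$ for all $\lambda\in\FF_q^\times$ and $w\in W$ (using that $\lambda^{-1}w\in W$), while $W\setminus\{0\}$ is already color $c$ by choice. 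Hence $\langle W,v\rangle\setminus\{0\}$ is entirely color $c$.

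To secure such a $v$, view each nonzero coset $v+W$ of $W$ in $V$ as colored by its ``pattern'' $(\phi(v+w))_{w\in W}\in[r]^W$, and apply Hales--Jewett with alphabet $\FF_q$ and $r^{|W|}$ colors to the quotient $V/W\cong\FF_q^{N-k+1}$. This yields, for $N$ sufficiently large, a monochromatic combinatorial line of cosets sharing a common pattern. The main obstacle is that this common pattern need not match the all-$c$ pattern required for the linear extension: Hales--Jewett provides some monochromatic structure, but of no prescribed color, and moreover the combinatorial line need not pass through the coset $W$ itself. As in Graham--Rothschild, this is circumvented by strengthening the induction to the parameter-words framework, producing monochromatic combinatorial subspaces compatible with a color specified in advance; equivalently, one iterates the inductive hypothesis to amass many monochromatic $(k-1)$-dim subspaces of color $c$ simultaneously, then uses Hales--Jewett on the resulting family to align a pair of them into a $k$-dim linear configuration. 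Combined with Shelah's primitive recursive bound on Hales--Jewett, this delivers the primitive recursive $n_{\mathsf{GR}}(q,r,k)$ asserted.
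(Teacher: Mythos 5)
The paper does not prove Theorem~\ref{thm:gr}; it is cited as a known result of Graham--Rothschild and Shelah (with Spencer's reduction from the affine to the linear form), so there is no ``paper's proof'' to compare against. Your sketch is thus a free-standing attempt at a result the authors deliberately treat as a black box.

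Your outline correctly identifies the broad shape of the Graham--Rothschild argument: induct on $k$, use the constant-on-lines hypothesis to reduce the extension step to finding $v$ with $\phi(v+w)=c$ for all $w\in W$, and bring in Hales--Jewett at the extension step. The verification that $\phi(\lambda v+w)=\phi(v+\lambda^{-1}w)$ is correct and is exactly the reason one works with colorings constant on lines. You also correctly name the two obstructions: the Hales--Jewett line gives a common pattern of no prescribed value, and the line need not pass through $W$.

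However, the paragraph that is supposed to dispose of these obstructions does not actually do so. The phrase ``strengthening the induction to the parameter-words framework, producing monochromatic combinatorial subspaces compatible with a color specified in advance'' is not a correct description of what the parameter-set machinery provides: one cannot prescribe the color of the monochromatic object produced by any Ramsey-type theorem; the whole point of that framework is to induct on a \emph{different} (and stronger) statement about parameter words, which after unwinding yields the vector-space statement, not to thread a chosen color through the recursion. More seriously, the ``equivalently'' clause is not equivalent and, as stated, is false: having many monochromatic $(k-1)$-dimensional subspaces of color $c$ and ``aligning a pair of them'' does not produce a monochromatic $k$-dimensional subspace. Two $(k-1)$-dimensional subspaces $W_1,W_2$ of color $c$ that span a $k$-dimensional space control only the lines lying in $W_1\cup W_2$; the span contains roughly $q^{k-1}$ further lines that are unconstrained. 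Already for $k=2$ a plane contains $q+1$ lines and two of them being color $c$ says nothing about the other $q-1$. So the step that closes the induction is missing, and the proposal as written does not constitute a proof. The genuine content of Graham--Rothschild is precisely the combinatorial bookkeeping (the double induction over parameter words) that you have compressed into one sentence; if you want a self-contained argument you would need to carry that out, or else cite the theorem as the paper does.
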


The best-known upper bound on $n_{\mathsf{GR}}$ as a function of $k$ is broadly comparable to the third function in the Ackermann hierarchy.

\begin{proof}[Proof of Lemma~\ref{thm:rado}]
Define $n_{\mathsf{rado}}(\cH)=n_{\mathsf{GR}}(q,r^{q-1},2k)$ and fix any $n_0\geq n_{\mathsf{rado}}$. With these choices we show that $(a)\Rightarrow(b)\Rightarrow(c)\Rightarrow(a)$.

$(a)\Rightarrow(b)$: Fix $\chi\colon\FF_q\setminus\{0\}\to[r]$ and take $n\geq n_0$. Assuming (a), there is an $H$-instance in $\Phi_{n,\chi}$ for some $H\in\cH$. Say that $H=(A,\psi)$ where $A$ is an $\ell\times k'$ matrix and call the $H$-instance $\bx\in(\FF_q^n\setminus\{0\})^{k'}$. We use this $\bx$ to find an $H$-instance in $\Phi_{k,\chi}$.

For each $1\leq j\leq k'$, let $b_j$ be the position of the first non-zero coordinate of $x_j$. Relabeling the variables as necessary, assume that $b_1\leq b_2\leq\cdots\leq b_{k'}$. Consider the map $\Pi\colon\FF_q^n\to\FF_q^k$ which sends $v=(v_1,\ldots,v_n)\in\FF_q^n$ to $(v_{b_1},v_{b_2},\ldots,v_{b_{k'}},0,\ldots,0)\in\FF_q^k$, where there are $k-k'$ zeros added at the end. One can easily check that each $x_j$ satisfies $\Phi_{n,\chi}(x_j)=\Phi_{k,\chi}(\Pi(x_j))$. Thus $\Pi(\bx)$ is an $H$-instance in $\Phi_{k,\chi}$, as desired.

$(b)\Rightarrow(c)$: Take $n\geq n_{\mathsf{rado}}$ and a coloring $\phi\colon V\setminus\{0\}\to[r]$ of a finite dimensional $\FF_q$-vector space $V$ satisfying $\dim V=n$. We define an $r^{q-1}$-coloring $\overline{\phi}\colon V\setminus\{0\}\to[r]^{q-1}$ that is constant on one-dimensional subspaces as follows. Pick an isomorphism $V\cong \FF_q^n$. For $x\in V\setminus\{0\}$ there exists a unique $a\in\FF_q\setminus\{0\}$ such that the first non-zero coordinate of $ax$ is 1. Then define \[\overline{\phi}(x):=(\phi(abx))_{b\in\FF_q\setminus\{0\}}\in[r]^{q-1}.\] By Theorem~\ref{thm:gr} and our choice of $n_{\mathsf{rado}}$, there exists $U\leq V$, a $2k$-dimensional subspace such that $\overline\phi$ is constant on $U\setminus\{0\}$. We claim that this implies that there is a coloring $\chi\colon\FF_q\setminus\{0\}\to[r]$ and an isomorphism $\iota\colon U\overset{\sim}{\to}\FF_q^{2k}$ such that $\phi$ agrees with the canonical coloring $\Phi_{2k,\chi}$ on $U\setminus\{0\}$.

The color of $U\setminus\{0\}$ under $\overline\phi$ is some vector in $[r]^{q-1}$, in other words, a function $\FF_q\setminus\{0\}\to[r]$. This is our $\chi$. Now pick a basis for $U\leq\FF_q^n$. Performing Gaussian elimination on this basis transforms it into a basis $u_1,\ldots,u_{2k}\in U$ such that the first non-zero coordinate of each of these vectors is 1 and the location of the first non-zero coordinates is strictly increasing from $u_1$ to $u_{2k}$. Then define $\iota\colon U\overset{\sim}{\to}\FF_q^{2k}$ to send $c_1u_1+\cdots+c_{2k}u_{2k}\mapsto(c_1,\ldots,c_{2k})\in\FF_q^{2k}$. These $\chi,\iota$ have the desired properties since $\fnz(c_1u_1+\cdots+c_{2k}u_{2k})=\fnz((c_1,\ldots,c_{2k}))$.

Consider $\FF_q^k\oplus\{0\}\subset\FF_q^{2k}$, the subspace of vectors of the form $(\ast,\ldots,\ast,0,\ldots,0)$. Assuming (b), there exists an $H$-instance $\bx\in((\FF_q^k\setminus\{0\})\oplus\{0\})^{k'}$ in $\Phi_{2k,\chi}$. Now for any $\bm y\in\left(\{0\}\oplus\FF_q^k\right)^{k'}$ we have that $x_j$ and $x_j+y_j$ are the same color under $\Phi_{2k,\chi}$ for all $j$. Pick $\bm y$ any generic solution to $A\bm y=\bm0$ in $\{0\}\oplus\FF_q^k$. Then $\bx+\bm y$ is a generic $H$-instance in $\Phi_{2k,\chi}$. Since $\phi|_U$ agrees with $\Phi_{2k,\chi}$, we have found a generic $H$-instance in $\phi$, as desired.

$(c)\Rightarrow(a)$: Obvious.
\end{proof}

\begin{proof}[Proof of Proposition~\ref{thm:rado-dichotomy}]
Fix a finite set $\cH$ of $r$-colored patterns over $\FF_q$. Write $k_{\mathsf{max}}$ for the maximum number of columns in any of the patterns $H\in\cH$. Define $n_{\mathsf{rado}}=n_{\mathsf{rado}}(\cH)$ to be the same as in Lemma~\ref{thm:rado} and, with foresight, define\[\epsilon_{\mathsf{rado}}=\frac1{10|\cH|}q^{-n_{\mathsf{rado}}k_{\mathsf{max}}}.\]

Suppose that we are not in case (b) of Proposition~\ref{thm:rado-dichotomy}; namely, that there is some finite dimensional $\FF_q$-vector space $V_0$ such that every coloring $\phi\colon V_0\setminus\{0\}\to[r]$ contains an $H$-instance for some $H\in\cH$. We now apply Lemma~\ref{thm:rado} to $\cH$ with parameter $n_0:=\max\{\dim V_0,n_{\mathsf{rado}}\}$. By assumption, statement (a) of Lemma~\ref{thm:rado} is true, so we conclude that statement (c) is also true: for every finite dimensional $\FF_q$-vector space $V$ satisfying $\dim V\geq n_{\mathsf{rado}}$ and every coloring $\phi\colon V\setminus\{0\}\to[r]$, there is a generic $H$-instance in $\phi$ for some $H\in\cH$.

Now we prove we are in case (a) of Proposition~\ref{thm:rado-dichotomy}. Fix $V$, a finite dimensional $\FF_q$-vector space with $\dim V = n \geq n_{\mathsf{rado}}$, and a coloring $\phi\colon V \to [r]$. We count the total number of generic $\cH$-instances in $\phi$. For ease of notation, given $H=(A,\psi)\in\cH$, write $k_H$ for the number of columns in $A$.

Consider the following random process: pick a uniform random $H=(A,\psi)\in\cH$ and then pick a uniform random generic solution $A\bx=0$ with $\bx\in V^{k_H}$. Write $N_H(\phi)$ for the number of generic $H$-instances in $\phi$. Then the probability that this process produces a pair $(H,\bx)$ where $\bx$ is a generic $H$-instance is \[\frac1{|\cH|}\sum_{H=(A,\psi)\in\cH}\frac{N_H(\phi)}{(q^n-1)(q^n-q)\cdots(q^n-q^{k_H-\rank A-1})}\leq\frac{10}{|\cH|}\sum_{H=(A,\psi)\in\cH}N_H(\phi)q^{-n(k_H-\rank A)}.\]

Now consider the following random process: pick $U\leq V$, a uniform random subspace of dimension $n_{\mathsf{rado}}$, pick a uniform random $H=(A,\psi)\in\cH$, and then pick a uniform random generic solution $A\bx=0$ with $\bx\in U^{k_H}$. Since, by assumption, there is at least one generic $\cH$-instance in $\phi|_U$, the probability that this process finds a pair $(H,\bx)$ where $\bx$ is a generic $H$-instance is at least\[\frac1{|\cH|}\frac1{(q^{n_{\mathsf{rado}}}-1)(q^{n_{\mathsf{rado}}}-q)\cdots(q^{n_{\mathsf{rado}}}-q^{k_{\mathsf{max}}-1})}\geq\frac{1}{|\cH|}q^{-n_{\mathsf{rado}}k_{\mathsf{max}}}.\]

Note that these two random processes actually produce the same distribution on pairs $(H,\bm x)$. (This is the key place where we use the assumption that the $\bx$ are generic solutions.) Combining these two bounds, we conclude that\[\sum_{H=(A,\psi)\in\cH}N_H(\phi)q^{-n(k_H-\rank A)}\geq\frac{1}{10} q^{-n_{\mathsf{rado}}k_{\mathsf{max}}}.\] Thus by the pigeonhole principle, there exists some $H=(A,\psi)\in\cH$ such that $N_H(\phi)q^{-n(k_H-\rank A)}$, i.e., the $H$-density in $\phi$, is at least $\epsilon_{\mathsf{rado}}$, as desired.
\end{proof}

\section{Proof of Proposition~\ref{thm:regularity}: arithmetic regularity lemmas}
\label{sec:regularity}

We give two proofs of Proposition~\ref{thm:regularity}, giving wowzer- and tower-type bounds respectively. The first proof can be considered the arithmetic analog of the strong regularity argument in \cite{AFKS00} and the second bears some resemblance to an arithmetic analog of the cylinder regularity argument in \cite[Theorem 1.3]{CF12}.

Our main technique for proving arithmetic regularity lemmas is the notion of energy.

\begin{defn}
Fix a finite dimensional $\FF_q$-vector space $V$ and functions $f_1,\ldots,f_k\colon V\to[-1,1]$. Given $\cP$ a partition of $S\subseteq V$, define $(f_i)_{\cP}\colon S\to[-1,1]$ to be the projection of $f_i|_S$ onto the $\sigma$-algebra generated by $\cP$. In other words, $(f_i)_\cP(x)$ is defined to be the average of $f_i$ over the part of $\cP$ containing $x$. The \emph{energy} of the partition is defined to be\[\cE(\cP):=\sum_{i=1}^k\norm{(f_i)_\cP}_{L^2(S)}^2.\]
One case will be of particular interest to us. Let $V_1\leq V$ be a linear subspace and $S\subseteq V$ any set which can be written as the union of cosets of $V_1$. Then write $\cP(V_1|S)$ for the partition of $S$ into cosets of $V_1$. When $S$ is not given, it is assumed to be all of $V$.
\end{defn}

We record the following three properties of energy.

\begin{prop}
\label{thm:energy}
For a finite dimensional $\FF_q$-vector space $V$ and functions $f_1,\ldots,f_k\colon V\to[-1,1]$, we have:
\begin{enumerate}
\item for any partition $\cP$ of a set $S\subseteq V$,\[0\leq\cE(\cP)\leq k;\]
\item for any partitions $\cP,\cQ$ of a set $S\subseteq V$ with $\cQ$ refining $\cP$ (written $\cQ\succeq\cP$),\[\cE(\cQ)-\cE(\cP)=\sum_{i=1}^k\norm{(f_i)_{\cQ}-(f_i)_{\cP}}_{L^2(S)}^2\geq0.\]
\item for $V_1\leq V$, if there exists $1\leq i\leq k$ and $x\in V$ such that $f_i|_{x+V_1}$ is not $\epsilon$-regular, there exists $V_2\leq V_1$ of codimension 1 such that\[\cE(\cP(V_2|x+V_1))-\cE(\cP(V_1|x+V_1))>\epsilon^2.\]
\end{enumerate}
\end{prop}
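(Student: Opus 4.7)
My plan is to treat the three parts in order; only part (3) contains real content, where the energy increment comes from a Parseval calculation on the hyperplane dual to a large Fourier coefficient.

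For (1), non-negativity is immediate from $\|\cdot\|_{L^2(S)}^2 \geq 0$. For the upper bound, each value $(f_i)_\cP(x)$ is a conditional average of values in $[-1,1]$ and hence itself lies in $[-1,1]$, so $\|(f_i)_\cP\|_{L^2(S)}^2 \leq 1$; summing over $i \in [k]$ yields $\cE(\cP) \leq k$.

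For (2), I would appeal to the standard Pythagorean identity for nested conditional expectations. Because $\cQ$ refines $\cP$, averaging $(f_i)_\cQ$ over a $\cP$-part recovers $(f_i)_\cP$, making $(f_i)_\cP$ the orthogonal projection of $(f_i)_\cQ$ onto the subspace of $L^2(S)$ consisting of functions constant on parts of $\cP$. In particular $(f_i)_\cQ - (f_i)_\cP$ has mean zero on every $\cP$-part and is therefore $L^2(S)$-orthogonal to the constant-on-$\cP$ function $(f_i)_\cP$, which gives
\[\|(f_i)_\cQ\|_{L^2(S)}^2 = \|(f_i)_\cP\|_{L^2(S)}^2 + \|(f_i)_\cQ - (f_i)_\cP\|_{L^2(S)}^2.\]
Summing over $i \in [k]$ and rearranging proves the stated identity, and non-negativity then follows.

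For (3), the argument is Fourier-analytic on $V_1$. Translating, set $g_j \colon V_1 \to [-1,1]$ by $g_j(y) := f_j(x+y)$. By hypothesis $\|\widehat{g_i - \E g_i}\|_\infty > \epsilon$, so there is a non-trivial character $\gamma_0 = \gamma_z$ on $V_1$ with $|\widehat{g_i}(\gamma_0)| > \epsilon$. Define $V_2 := \{y \in V_1 : y \cdot z = 0\}$, an $\FF_q$-subspace of $V_1$ of codimension $1$. Any $y \in V_2$ satisfies $\gamma_0(y) = \exp(2\pi i \operatorname{tr}(y \cdot z)/p) = 1$, so $\gamma_0 \in V_2^\perp \setminus \{1\}$. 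The Fourier coefficients of $(g_j)_{\cP(V_2|V_1)}$ coincide with those of $g_j$ on $V_2^\perp$ and vanish elsewhere, while $(g_j)_{\cP(V_1|V_1)}$ is Fourier-supported only at the trivial character. Each $j$-term in the energy difference is non-negative by part (2), so dropping all terms except $j = i$ and applying Parseval on $V_1$ gives
\[\cE(\cP(V_2|x+V_1)) - \cE(\cP(V_1|x+V_1)) \geq \sum_{\gamma \in V_2^\perp \setminus \{1\}} |\widehat{g_i}(\gamma)|^2 \geq |\widehat{g_i}(\gamma_0)|^2 > \epsilon^2.\]

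The only subtlety, and the place I would pause to get right, is choosing $V_2$ as an $\FF_q$-subspace rather than the literal kernel of $\gamma_0$, which is only an $\FF_p$-subspace when $q > p$. The substitute $V_2 = z^\perp \cap V_1$ is contained in this kernel since $y \cdot z = 0 \in \FF_q$ forces $\operatorname{tr}(y \cdot z) = 0 \in \FF_p$, and it has the correct $\FF_q$-codimension; this suffices because the increment argument only needs $\gamma_0 \in V_2^\perp$, not equality of kernels.
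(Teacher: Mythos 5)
Your proposal is correct and essentially mirrors the paper's argument: the same choice of $V_2 = \{y \in V_1 : y\cdot z = 0\}$ and the same key observation that projecting onto cosets of $V_2$ preserves the Fourier coefficient at $\gamma_z$. The only cosmetic difference is that you invoke Parseval to compute the energy increment exactly as $\sum_{\gamma \in V_2^\perp\setminus\{1\}} |\widehat{g_i}(\gamma)|^2$, whereas the paper applies Cauchy--Schwarz to the single inner product $\langle (f_i)_{\mathcal{Q}} - (f_i)_{\mathcal{P}}, \gamma_z\rangle$; these yield the same bound.
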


\begin{proof}
The first result follows since trivially $0\leq\norm{(f_i)_{\cP}}_{L^2(S)}^2\leq 1$.

The second result follows from the Pythagorean theorem: $\norm{(f_i)_{\cP}}_{L^2(S)}^2+\norm{(f_i)_{\cQ}-(f_i)_{\cP}}_{L^2(S)}^2=\norm{(f_i)_{\cQ}}_{L^2(S)}^2$ as long as $(f_i)_{\cP}$ and $(f_i)_{\cQ}-(f_i)_{\cP}$ are orthogonal. We compute \[\ang{(f_i)_{\cP},(f_i)_{\cQ}-(f_i)_{\cP}}_{L^2(S)}=\ang{f_i,\left((f_i)_{\cQ}-(f_i)_{\cP}\right)_{\cP}}_{L^2(S)}=\ang{f_i,0}_{L^2(S)}=0.\]

To prove the third result, suppose that $\lvert\wh{f_i|_{x+V_1}}(\gamma_z)\rvert>\epsilon$ for some $\gamma_z\in V_1^*\setminus\{0\}$. Recall that $\gamma_z$ is the character $\gamma_z(x):=\exp(  2\pi i \operatorname{tr}(x \cdot z)/p)$. Define $V_2\leq V_1$ to be $V_2=\{y\in V_1:y\cdot z=0\}$. For ease of notation, write $\cP$ for $\cP(V_1|x+V_1)$ and $\cQ$ for $\cP(V_2|x+V_1)$ for the rest of the proof. Note that $\gamma_z$ is constant on parts of $\cQ$. Thus we can write
\begin{align*}
\epsilon^2
&<\abs{\wh{f_i|_{x+V_1}}(\gamma_z)}^2\\
&=\abs{\ang{f_i,\gamma_z}_{L^2(x+V_1)}}^2\\
&=\abs{\ang{(f_i)_{\cQ},\gamma_z}_{L^2(x+V_1)}}^2\\
&=\abs{\ang{(f_i)_{\cQ}-(f_i)_{\cP},\gamma_z}_{L^2(x+V_1)}}^2 && \text{(since $\E\gamma_z=0$ and $(f_i)_{\cP}$ is constant)}\\
&\leq\norm{(f_i)_{\cQ}-(f_i)_{\cP}}_{L^2(x+V_1)}^2 && \text{(Cauchy-Schwarz inequality)}\\
&\leq\cE(\cQ)-\cE(\cP). && \qedhere
\end{align*}
\end{proof}

\subsection{Proof 1: Strong arithmetic regularity}

We first prove Green's arithmetic regularity lemma in finite field vector spaces by the standard energy increment method. We then iterate this arithmetic regularity lemma to prove a strong arithmetic regularity lemma. Given functions $f_1,\ldots,f_k\colon V\to[-1,1]$, a subspace $V_0\leq V$, and $\epsilon>0$, this strong arithmetic regularity lemma gives subspaces $V_2\leq V_1\leq V_0$ with several desirable properties. To complete the proof of Proposition~\ref{thm:regularity} we choose a random complement $U$ satisfying $U\oplus V_1= V$ and show that $U$ has the desired properties with positive probability.

For completeness we include the standard proof of Green's arithmetic regularity lemma.

\begin{thm}[Green's arithmetic regularity]
\label{thm:green-reg}
For a finite dimensional $\FF_q$-vector space $V$, functions $f_1,\ldots,f_k\colon V\to[-1,1]$, a subspace $V_0\leq V$, and a parameter $\epsilon>0$, there exists $n_{\mathsf{green-reg}} = n_{\mathsf{green-reg}}(\epsilon,q,k,\codim V_0)$ such that there exists a subspace $V_1\leq V_0$ such that
\begin{enumerate}
\item $\codim V_1\leq n_{\mathsf{green-reg}}$;
\item for each $1\leq i\leq k$, for all but an $\epsilon$-fraction of $x\in V$ the function $f_i|_{x+V_1}$ is $\epsilon$-regular.
\end{enumerate}
\end{thm}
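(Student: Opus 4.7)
The plan is to apply the standard energy-increment method using the Fourier-analytic machinery already set up in Proposition~\ref{thm:energy}. I initialize $V_1 = V_0$ and iterate: at each stage, either condition (2) already holds for the current $V_1$ and I stop, or there is an index $i^* \in [k]$ and a set $B \subseteq V/V_1$ of cosets with $|B| \geq \epsilon |V/V_1|$ such that $f_{i^*}|_{x+V_1}$ fails to be $\epsilon$-regular for every $x+V_1 \in B$. For each such bad coset, Proposition~\ref{thm:energy}(3) supplies a codimension-one subspace $V_2^{(x)} \leq V_1$ on which the per-coset energy strictly gains more than $\epsilon^2$. I then pass to the common refinement $V_1' = \bigcap_{x+V_1 \in B} V_2^{(x)} \leq V_1$.

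By the monotonicity clause Proposition~\ref{thm:energy}(2), refining further only increases per-coset energy, so on every bad coset the per-coset gain from $V_1$ to $V_1'$ still exceeds $\epsilon^2$, while on good cosets it remains non-negative. Averaging over $V/V_1$ (and noting that the global energy is the average of the per-coset energies) yields the global energy increment
\[
\cE(\cP(V_1'|V)) - \cE(\cP(V_1|V)) \;\geq\; \frac{|B|}{|V/V_1|}\,\epsilon^2 \;\geq\; \epsilon^3.
\]
Since Proposition~\ref{thm:energy}(1) caps the total energy at $k$, this iteration must terminate after at most $k/\epsilon^3$ rounds, at which point condition (2) is guaranteed.

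All that remains is the codimension bookkeeping. Each $V_2^{(x)}$ has codimension one in $V_1$ and there are at most $|B| \leq |V/V_1| = q^{\codim V_1}$ of them, so the intersection satisfies $\codim V_1' \leq \codim V_1 + q^{\codim V_1}$ at each step. Iterating this recurrence for $k/\epsilon^3$ steps starting from $c_0 = \codim V_0$ produces a tower-type function $n_{\mathsf{green-reg}}(\epsilon, q, k, \codim V_0)$ bounding the final codimension, as required.

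The main obstacle is precisely the tension encoded in this recurrence: refining by only a single codimension per step would give an energy increment of $\epsilon^2/|V/V_1|$, which is too small for termination in boundedly many rounds, whereas aggregating all bad cosets into a single refinement $V_1'$ keeps the per-step gain at a uniform $\Omega(\epsilon^3)$ at the cost of the potentially exponential codimension jump $c \mapsto c + q^c$. Beyond this unavoidable quantitative price — which is the source of the tower-type dependence — I do not expect any further difficulty.
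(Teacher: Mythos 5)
Your proposal is correct and is essentially the paper's own proof: the same energy-increment iteration using Proposition~\ref{thm:energy}(1)--(3), the same per-step codimension recurrence $c \mapsto c + q^c$, and the same $k\epsilon^{-3}$ bound on the number of rounds. No substantive differences.
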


\begin{proof}
We iterate to produce a sequence of subspaces $V_0\geq V_1\geq\cdots\geq V_M$ such that $V_M$ has the desired properties. This sequence will satisfy $\codim V_{m+1}\leq \codim V_m+q^{\codim V_m}$ and $\mathcal E(\cP(V_{m+1}))>\mathcal E(\cP(V_m))+\epsilon^3$.

Suppose that we have produced some $V_m$ that does not satisfy property (2) above. Pick some complement $U$ satisfying $U\oplus V_m=V$. Then there exist some $1\leq i\leq k$ and $\epsilon |U|$ values of $x\in U$, say $U'\subset U$, such that $f_i|_{x+V_m}$ is not $\epsilon$-regular for each $x\in U'$.

Fix $x\in U'$. Since $f_i|_{x+V_m}$ is not $\epsilon$-regular, part (3) of Proposition~\ref{thm:energy} gives a subspace $V(x)\leq V_m$ of codimension 1 such that $\mathcal E(\cP(V(x)|x+V_m))>\mathcal E(\cP(V_m|x+V_m))+\epsilon^2$. Define $V_{m+1}$ to be the intersection of all of the $V(x)$'s. Then $\codim V_{m+1}\leq \codim V_m+|U'|\leq \codim V_m+q^{\codim V_m}$, as desired.

Note that for $W\leq V_m$, we have $\mathcal E(\cP(W))=\E_{x\in U} [\mathcal E(\cP(W|x+V_m))]$.  Therefore
\begin{align*}
\mathcal E(\cP(V_{m+1}))-\mathcal E(\cP(V_m))
&=\E_{x\in U}\left[\mathcal E(\cP(V_{m+1}|x+V_m))-\mathcal E(\cP(V_m|x+V_m))\right]\\
&\geq\frac{|U'|}{|U|}\E_{x\in U'}\left[\mathcal E(\cP(V(x)|x+V_m))-\mathcal E(\cP(V_m|x+V_m))\right]\\
&>\epsilon^3.
\end{align*}

Since $0\leq \mathcal E(\cP)\leq k$ for all $\cP$, we conclude that $M\leq k\epsilon^{-3}$. Thus $V_M$ has the desired properties where $n_{\mathsf{green-reg}}$ grows at a rate comparable to a tower of $q$'s of height $k\epsilon^{-3}$.
\end{proof}

\begin{thm}[Strong arithmetic regularity]
\label{thm:strong-reg}
For a finite dimensional $\FF_q$-vector space $V$, functions $f_1,\ldots,f_k\colon V\to[-1,1]$, a subspace $V_0\leq V$, and parameters $\delta>0$ and $\bm\epsilon=(\epsilon_0,\epsilon_1,\ldots)$ satisfying $\epsilon_0\geq\epsilon_1\geq\cdots>0$, there exists $n_{\mathsf{strong-reg}}=n_{\mathsf{strong-reg}}(\delta,\bm\epsilon,q,k,\codim V_0)$ such that there exist subspaces $V_2\leq V_1\leq V_0$ such that
\begin{enumerate}
\item $\codim V_2\leq n_{\mathsf{strong-reg}}$;
\item $\mathcal E(\cP(V_2))\leq\mathcal E(\cP(V_1))+\delta$;
\item for each $1\leq i\leq k$, for all but an $\epsilon_{\codim V_1}$-fraction of $x\in  V$ the function $f_i|_{x+V_2}$ is $\epsilon_{\codim V_1}$-regular.
\end{enumerate}
\end{thm}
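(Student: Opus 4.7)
The plan is to prove Theorem~\ref{thm:strong-reg} by iterating Green's arithmetic regularity lemma (Theorem~\ref{thm:green-reg}) via the standard energy increment scheme, producing a decreasing chain of subspaces that eventually ``stabilizes'' in energy.

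More precisely, I would construct a finite sequence $V_0 \geq W_1 \geq W_2 \geq \cdots \geq W_M$ as follows. Having chosen $W_m$, let $d_m = \codim W_m$ and apply Theorem~\ref{thm:green-reg} to the functions $f_1,\dots,f_k$ with the subspace $W_m$ playing the role of the outer subspace and the regularity parameter chosen to be $\epsilon_{d_m}$. This yields a subspace $W_{m+1} \leq W_m$ of codimension at most $n_{\mathsf{green-reg}}(\epsilon_{d_m}, q, k, d_m)$ such that for each $1 \leq i \leq k$, for all but an $\epsilon_{d_m}$-fraction of $x \in V$, the function $f_i|_{x+W_{m+1}}$ is $\epsilon_{d_m}$-regular. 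Now check whether $\mathcal{E}(\cP(W_{m+1})) \leq \mathcal{E}(\cP(W_m)) + \delta$; if yes, stop and set $V_1 = W_m$, $V_2 = W_{m+1}$, otherwise continue to step $m+1$.

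The process must terminate after $M \leq \lceil k/\delta \rceil$ iterations. Indeed, part (1) of Proposition~\ref{thm:energy} bounds $0 \leq \mathcal{E}(\cP) \leq k$, and part (2) (since each $W_{m+1}$ refines $W_m$, so $\cP(W_{m+1})$ refines $\cP(W_m)$) guarantees $\mathcal{E}(\cP(W_{m+1})) \geq \mathcal{E}(\cP(W_m))$, so as long as we have not halted, the energy jumps up by at least $\delta$ at every step. Once we halt at index $m$, the pair $(V_1, V_2) = (W_m, W_{m+1})$ satisfies condition (2) by the halting criterion and condition (3) by construction, since the regularity parameter used at step $m$ was $\epsilon_{d_m} = \epsilon_{\codim V_1}$.

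The codimension of $V_2$ is controlled by iterating the bound $\codim W_{m+1} \leq n_{\mathsf{green-reg}}(\epsilon_{d_m}, q, k, d_m)$ a total of $M \leq \lceil k/\delta \rceil$ times starting from $\codim V_0$; this produces the function $n_{\mathsf{strong-reg}}(\delta, \bm\epsilon, q, k, \codim V_0)$ claimed in the statement. The only mildly delicate point — and the reason the resulting bound is wowzer-type rather than tower-type — is that each application of Green's regularity is with parameter $\epsilon_{d_m}$, where $d_m$ itself is a tower in $\epsilon_{d_{m-1}}$, so the bounds compose into a wowzer tower of height roughly $k/\delta$. I do not foresee any nontrivial obstacle beyond setting up the iteration and tracking these bounds carefully.
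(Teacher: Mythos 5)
Your proof is correct and takes essentially the same route as the paper: iterate Green's arithmetic regularity lemma (Theorem~\ref{thm:green-reg}) with parameter $\epsilon_{\codim W_m}$ at each step, and use the bound $0\le\cE(\cP)\le k$ together with monotonicity under refinement to guarantee that within $\lceil k/\delta\rceil$ iterations some consecutive pair $(W_m,W_{m+1})$ has energy increment at most $\delta$. Taking $(V_1,V_2)=(W_m,W_{m+1})$ for the first such $m$ is exactly the paper's argument.
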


\begin{proof}
We iterate Green's arithmetic regularity lemma to produce a sequence of subspaces $ V_0=V^{(0)}\geq V^{(1)}\geq\cdots\geq V^{(M)}$ such that setting $V_1=V^{(M-1)}$ and $V_2=V^{(M)}$ proves the theorem. Let $V^{(m+1)}$ be the result of Theorem~\ref{thm:green-reg} to $V^{(m)}$ with parameter $\epsilon_{\codim V_m}$.

Note that $0\leq \mathcal E(\cP(V^{(0)}))\leq\mathcal E(\cP(V^{(1)}))\leq\cdots\leq \mathcal E(\cP(V^{(M)}))\leq k$. Therefore we can stop the iteration at $M\leq k\delta^{-1}$ such that $\mathcal E(\cP(V^{(M)}))-\mathcal E(\cP(V^{(M-1)}))\leq \delta$, as desired.
\end{proof}

\begin{proof}[Proof of Proposition~\ref{thm:regularity}]
We can assume that $|V_0|\leq(\epsilon/4)|V|$. (If this does not hold, simply replace $V_0$ with a subspace of codimension at most $\lceil\log_q(4\epsilon^{-1})\rceil$; if this is impossible, then the desired result is trivially true by setting $V_1=V_2=\{0\}$.)

Apply Theorem~\ref{thm:strong-reg} to $f_1,\ldots,f_k\colon V\to[-1,1]$ and $V_0\leq V$ with parameters $\delta=\epsilon^3/4$ and $\epsilon_m=\min(\epsilon,q^{-m}/(2k))$. Write $V_2\leq V_1\leq V_0$ for the subspaces produced. We claim that a uniform random choice of $U\leq V$ satisfying $U\oplus V_1=V$ also satisfies the desired properties with positive probability.

For each $i$, pick $x\in V$ uniformly at random. Property (3) implies that with probability at least $1-\epsilon_{\codim V_1}$, the function $f_i|_{x+V_2}$ is $\epsilon$-regular. By the union bound, with probability at least $1-\epsilon_{\codim V_1}\cdot k\cdot (|U|-1)>1/2$, for every $x\in U\setminus\{0\}$ and every $1\leq i\leq k$, the function $f_i|_{x+V_2}$ is $\epsilon$-regular.

Now by part (2) of Proposition~\ref{thm:energy}, we write
\begin{align*}
\epsilon^3/4
&\geq \mathcal E(\cP(V_2))-\mathcal E(\cP(V_1))\\
&=\sum_{i=1}^k\norm{(f_i)_{\cP(V_2)}-(f_i)_{\cP(V_1)}}_{L^2(V)}^2\\
&=\E_{x\in  V}\left[\sum_{i=1}^k\left(\E_{y\in V_2}[f_i(x+y)]-\E_{y\in V_1}[f_i(x+y)]\right)^2\right].
\end{align*}

This implies that for a uniform random choice of $x\in V$, with probability at least $1-\epsilon/4$, this $x$ satisfies
\begin{equation}
\label{eq:good-x}
|\E_{y\in V_2}[f_i(x+y)]-\E_{y\in V_1}[f_i(x+y)]|\leq \epsilon\quad\text{for all}\quad 1\leq i\leq k.\tag{$\ddagger$}
\end{equation}
Therefore choosing $U$ randomly, the expected number of $x\in U$ failing to satisfy (\ref{eq:good-x}) is at most $1+(\epsilon/4)(|U|-1)<(\epsilon/2)|U|$. By Markov's inequality, with probability at least $1/2$, at most an $\epsilon$-fraction of $x\in U$ fail to satisfy (\ref{eq:good-x}). Thus by the union bound, $U$ has the desired properties with positive probability.
\end{proof}

\subsection{Proof 2: An improved bound}

We prove a weak arithmetic regularity lemma with exponential bounds and then iterate this lemma to produce a strong arithmetic regularity lemma with tower-type bounds that is just strong enough to prove Proposition~\ref{thm:regularity}.

\begin{defn}
For a finite dimensional $\FF_q$-vector space $V$ and $U\leq V$, a \emph{decomposition of $V$ with respect to $U$} is a set of subspaces $\mathcal D=\{W_1,\ldots,W_\ell\}$ all of the same codimension and all transverse to $U$ (i.e., $W_i\cup U$ spans $V$ for each $i$) such that $W_1\setminus U,\ldots,W_\ell\setminus U$ are disjoint sets that together partition $ V\setminus U$. We write $\codim \mathcal D:=\codim W_1=\cdots=\codim W_\ell$.
\end{defn}

One can easily see that every decomposition $\cD$ has size\[|\cD|=\frac{q^{\dim V}-q^{\dim U}}{q^{\dim V-\codim\cD}-q^{\dim U-\codim\cD}}=q^{\codim\cD}.\]

To build intuition, we give several constructions of decompositions of increasing complexity. We start with a very simple example.

\begin{example}
\label{thm:decomp-1}
Take $V=\FF_q^2$ and $U=\{(0,x):x\in\FF_q\}$. We define $\cD$, a decomposition of $V$ with respect to $U$, to be the set of all 1-dimensional subspaces of $V$ other than $U$ itself. More concretely, $\cD=\{W_a\}_{a\in\FF_q}$ where $W_a=\{(x,ax):x\in\FF_q\}$.
\end{example}

Next we give a slightly more complicated example.

\begin{example}
\label{thm:decomp-2}
Take $V=\FF_q^{2n}$ and $U=\{0\}\oplus\FF_q^n$. We view $V\cong\F_{q^n}\oplus\F_{q^n}$, an isomorphism of $\FF_q$-vector spaces. Then we take the same construction as the previous example: $\cD=\{W_a\}_{a\in\F_{q^n}}$ where $W_a=\{(x,ax):x\in\F_{q^n}\}$.
\end{example}

Both these examples are very special: they both have the property that $W\cap U=\{0\}$ for each $W\in\cD$. We can make our examples slightly more interesting as follows.

\begin{example}
\label{thm:decomp-3}
Take any $U'\leq U\leq V$ with $\dim V-\dim U=\dim U-\dim U'$. The previous example gives a decomposition $\cD$ of $V/U'$ with respect to $U/U'$. Then $\cD'=\{W\oplus U'\}_{W\in\cD}$ is a decomposition of $V$ with respect to $U$.
\end{example}

The decompositions $\cD$ produced by Example~\ref{thm:decomp-3} are slightly more interesting, but they satisfy $W\cap U=U'$ for each $W\in\cD$. To prove our decomposition regularity lemma, we will need to consider more complicated decompositions.

For two decompositions $\cD,\cD'$ of $V$ with respect to $U$, say that $\cD'$ refines $\cD$ (written as $\cD'\succeq \cD$) if the partition $\bigcup_{W\in\mathcal D'}(W\setminus U)$ refines the partition $\bigcup_{W\in\mathcal D}(W\setminus U)$.

We can create more complicated examples of decompositions of $V$ with respect to $U$ as follows. We construct a sequence of decompositions $\{V\}=\mathcal D_0\preceq\mathcal D_1\preceq\cdots$ iteratively. To get from $\mathcal D_m$ to $\mathcal D_{m+1}$, we refine each part of $\mathcal D_m$. Fix $W\in\cD_m$ and pick any subspace $U'\leq U\cap W$ such that $\dim W-\dim (U\cap W)=\dim (U\cap W)-\dim U'$. Example~\ref{thm:decomp-3} gives a decomposition of $W$ with respect to $U$. Replacing each part $W\in\cD_m$ with the subspaces produces this way gives a new decomposition $\cD_{m+1}$.

Fix $f_1,\ldots,f_k\colon V\to[-1,1]$ and a subspace $U\leq V$. For a decomposition $\cD$ of $V$ with respect to $U$, we associate to it the partition $\cP(\cD)$ whose parts are, for each $W\in \cD$, the cosets of $W\cap U$ contained in $W\setminus U$. As in the previous section, we will consider the energy \[\cE(\cP(\cD))=\sum_{i=1}^k\norm{(f_i)_{\cP}}_{L^2(V\setminus U)}^2.\]

\begin{figure}[t]
\centering
\begin{subfigure}{0.4\textwidth}
\centering
\begin{tikzpicture}
\draw[very thick] (0,0) -- (4,0) -- (4,4) -- (0,4) -- cycle;
\draw[very thick] (0,1) -- (4,1);
\draw[very thick] (0,2) -- (4,2);
\draw[very thick] (0,3) -- (4,3);
\end{tikzpicture}
\caption{An initial partition.\vspace{12 pt}}
\label{fig:a}
\end{subfigure}
\qquad
\begin{subfigure}{0.4\textwidth}
\centering
\begin{tikzpicture}
\draw[very thick] (0,0) -- (4,0) -- (4,4) -- (0,4) -- cycle;
\draw[very thick] (0,1) -- (4,1);
\draw[very thick] (0,2) -- (4,2);
\draw[very thick] (0,3) -- (4,3);
\draw (0,3.5) -- (4,3.5);
\draw (2,2) -- (2,3);
\draw (0,1) -- (4,2);
\end{tikzpicture}
\caption{Refining some parts of the initial partition increases the energy.}
\label{fig:b}
\end{subfigure}
\begin{subfigure}{0.4\textwidth}
\centering
\begin{tikzpicture}
\draw[very thick] (0,0) -- (4,0) -- (4,4) -- (0,4) -- cycle;
\draw[very thick] (0,1) -- (4,1);
\draw[very thick] (0,2) -- (4,2);
\draw[very thick] (0,3) -- (4,3);
\draw (0,0.5) -- (4,0.5);
\draw (0,1.5) -- (4,1.5);
\draw (0,2.5) -- (4,2.5);
\draw (0,3.5) -- (4,3.5);
\draw (2,0) -- (2,4);
\draw (2,0) -- (4,0.5);
\draw (0,0) -- (4,1);
\draw (0,0.5) -- (4,1.5);
\draw (0,1) -- (4,2);
\draw (0,1.5) -- (4,2.5);
\draw (0,2) -- (4,3);
\draw (0,2.5) -- (4,3.5);
\draw (0,3) -- (4,4);
\draw (0,3.5) -- (2,4);
\end{tikzpicture}
\caption{Taking the common refinement produces a partition with many parts.}
\label{fig:c}
\end{subfigure}
\qquad
\begin{subfigure}{0.4\textwidth}
\centering
\begin{tikzpicture}
\draw[very thick] (0,0) -- (4,0) -- (4,4) -- (0,4) -- cycle;
\draw[very thick] (0,1) -- (4,1);
\draw[very thick] (0,2) -- (4,2);
\draw[very thick] (0,3) -- (4,3);
\draw (0,3.5) -- (4,3.5);
\draw (2,2) -- (2,3);
\draw (0,1) -- (4,2);
\draw (2,0) -- (2,1);
\end{tikzpicture}
\caption{A minimal decomposition which contains \subref{fig:b} and has many fewer parts. (The bottom part is partitioned arbitrarily.)}
\label{fig:d}
\end{subfigure}
\caption{An example of refining partitions and decompositions.}
\label{fig:ex}
\end{figure}
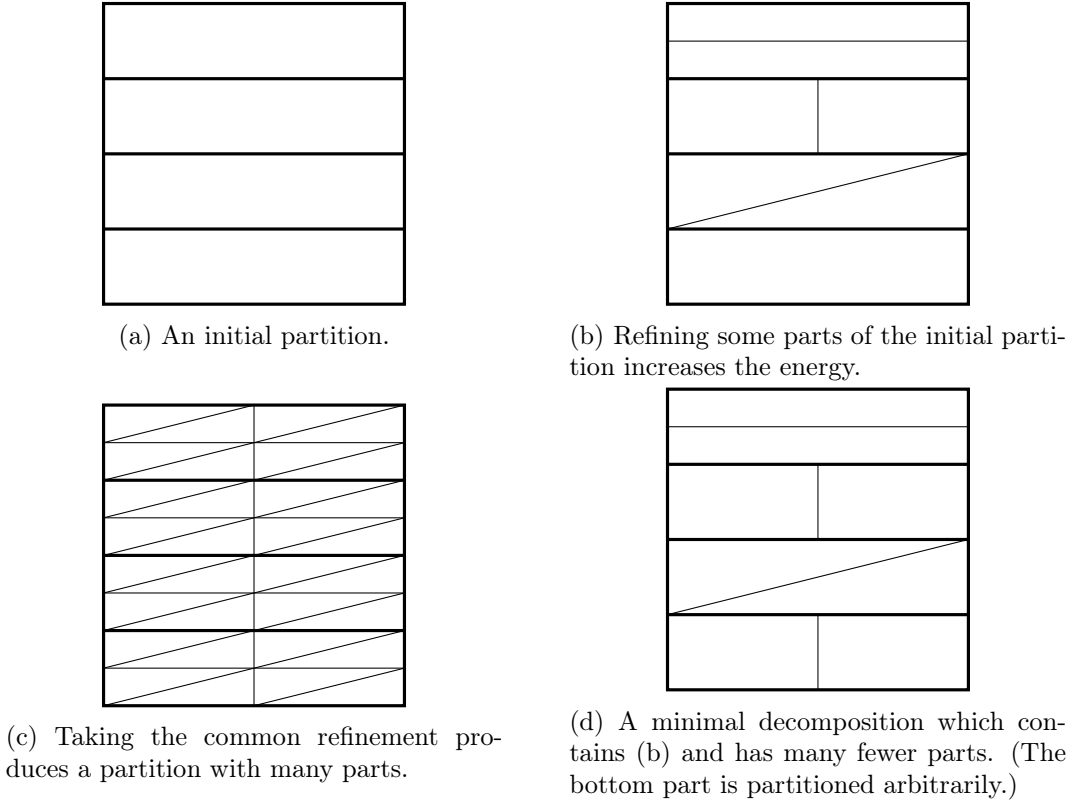

Now we look at an example, depicted in Figure~\ref{fig:ex}, to explain why using decompositions can help us prove a more efficient regularity lemma. Suppose $U\leq\F_2^n$ is a codimension 1 subspace. Then $\F_2^n\setminus U$ is just $(n-1)$-dimensional affine space. Thus a decomposition of $\F_2^n$ with respect to $U$ is simply a partition of $\F_2^{n-1}$ into affine subspaces of the same codimension. Contrast this to the situation in the previous subsection where we we partitioned our space into affine subspaces which were all cosets of the same subspace.

This freedom allows us to prove a more efficient regularity lemma. Suppose after some number of iterations we have produced a partition of $\F_2^{n-1}$ depicted in Figure~\ref{fig:a}. In both the proof of arithmetic regularity and our weak regularity lemma, the next step is to find parts of the partition for which the functions $f_1,\ldots,f_k$ are not regular and find a refinement of each of these parts which increases the energy. Suppose that the first three parts have refinements which increase the energy as depicted in Figure~\ref{fig:b}.

If we want our next partition to be made of cosets of the same subspace (as in the proof of arithmetic regularity) we need to take the common refinement, pictured in Figure~\ref{fig:c}. However, if all we want is to produce a decomposition (as will be the case in the proof of our weak arithmetic regularity lemma) we simply have to arbitrarily partition the remaining parts to make every part the same codimension, pictured in Figure~\ref{fig:d}.

Therefore we can prove our weak arithmetic regularity via an iteration where the codimension of our decomposition only increases by a constant each iteration where in proving arithmetic regularity the codimension of the partition increases by an exponential at every step of the iteration.

For the case of general $U\leq V$ the proof is slightly more complicated, but the main idea is the same. Instead of a picture like the one above, we have $q^{\codim U}-1$ similar copies to work with (the cosets of $U$). Since $U$ is not codimension 1, we cannot use Example~\ref{thm:decomp-1} at each step to refine, but instead have to use Example~\ref{thm:decomp-2}. In addition, each step of the iteration produces a smaller energy increment ($\epsilon^2/q^{\codim U}$ instead of $\epsilon^2$), but this increment still produces exponential-type bounds instead of tower-type bounds.

Now we begin the proof.

\begin{thm}[Weak decomposition regularity]
\label{thm:weak-reg}
For a finite dimensional $\FF_q$-vector space $V$, functions $f_1,\ldots,f_k\colon V\to[-1,1]$, a subspace $U\leq V$, and a parameter $\epsilon>0$, if $\dim V\geq \codim U\cdot q^{\codim U} k\epsilon^{-3}$, then there exists a decomposition $\cD$ such that
\begin{enumerate}
\item $\codim \cD\leq \codim U\cdot q^{\codim U} k \epsilon^{-3}$;
\item for each $1\leq i\leq k$, for all but an $\epsilon$-fraction of $W\in\cD$, for all $x\in W\setminus U$ the function $f_i|_{x+W\cap U}$ is $\epsilon$-regular.
\end{enumerate}
\end{thm}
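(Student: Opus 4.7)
The proof is an energy-increment argument that exploits the flexibility of decompositions described in the preceding discussion. Following the intuition of Figure~\ref{fig:ex}, the key difference from the proof of Theorem~\ref{thm:green-reg} is that when refining a part that witnesses irregularity, we do not need to enforce a single common refining subspace across all parts; each part of the current decomposition can be refined separately, provided the combinatorial constraints (equal codimensions, transversality to $U$, partition of $V\setminus U$) are maintained.

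My plan is to construct a chain of decompositions $\{V\}=\cD_0\preceq\cD_1\preceq\cdots$ with $\codim\cD_{m+1}=\codim\cD_m+\codim U$ at each step, stopping as soon as property (2) holds. At step $m$, I refine each part $W\in\cD_m$ independently using the construction of Example~\ref{thm:decomp-3} inside $W$ relative to $W\cap U$: choose a subspace $U'_W\le W\cap U$ of dimension $\dim(W\cap U)-\codim U$ (this uses transversality $W+U=V$, which gives $\codim_W(W\cap U)=\codim U$), then apply the Example~\ref{thm:decomp-3} construction with subsubspace $U'_W$. This decomposes $W\setminus U$ into cosets of $U'_W$ lying in $W$-parts of codimension $\codim U$ in $W$. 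The union of these sub-decompositions across all $W\in\cD_m$ forms $\cD_{m+1}$; transversality is inherited since $W'+U=W+U=V$ for any part $W'$ of the refinement of $W$.

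The key estimate is that if $\cD_m$ fails property (2), then one step of refinement with a careful choice of $U'_W$ for each bad $W$ increases the total energy by at least $\epsilon^3/q^{\codim U}$. Indeed, if (2) fails then for some $i$ at least an $\epsilon$-fraction of parts $W$ are \emph{bad}, meaning there exists a coset $x+W\cap U\subset W\setminus U$ with $f_i|_{x+W\cap U}$ not $\epsilon$-regular. For each bad $W$, Proposition~\ref{thm:energy}(3) produces $V^x\le W\cap U$ of codimension $1$ satisfying $\cE(\cP(V^x\mid x+W\cap U))-\cE(\cP(W\cap U\mid x+W\cap U))>\epsilon^2$. I choose $U'_W$ to be any subspace of the required dimension contained in $V^x$, which is possible as long as $\codim_{W\cap U}V^x=1\le\codim U$. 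Since the refined partition on $x+W\cap U$ refines $\cP(V^x\mid x+W\cap U)$, Proposition~\ref{thm:energy}(2) preserves the $>\epsilon^2$ gain on this coset. Averaging over the $q^{\codim U}-1$ cosets of $W\cap U$ in $W\setminus U$ (the other cosets contribute nonnegatively) gives a gain of at least $\epsilon^2/q^{\codim U}$ on $W\setminus U$; then averaging over the $\epsilon$-fraction of bad $W$ gives the claimed $\epsilon^3/q^{\codim U}$ total increment. For good $W$ we pick $U'_W$ arbitrarily.

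Since $0\le\cE(\cP(\cD))\le k$ by Proposition~\ref{thm:energy}(1), the iteration terminates after at most $kq^{\codim U}\epsilon^{-3}$ steps, giving $\codim\cD\le\codim U\cdot kq^{\codim U}\epsilon^{-3}$ as required. The main technical obstacle, and what the dimension hypothesis controls, is ensuring that the refinement remains legal throughout: Example~\ref{thm:decomp-3}'s dimensional identity $\dim W-\dim(W\cap U)=\dim(W\cap U)-\dim U'_W$ together with the condition $U'_W\le V^x$ requires $\dim(W\cap U)\ge \codim U$, i.e.\ $\dim W\ge 2\codim U$ (in fact one wants a safety margin of one more $\codim U$ to ensure nontriviality at the final step), at every stage of the iteration. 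The hypothesis $\dim V\ge\codim U\cdot q^{\codim U}k\epsilon^{-3}$ guarantees this, since after at most $kq^{\codim U}\epsilon^{-3}$ refinements the codimension of any part is at most $\dim V-O(\codim U)$.
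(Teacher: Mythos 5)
Your proof is correct and follows essentially the same route as the paper: an energy-increment iteration on decompositions, refining each part independently via Example~\ref{thm:decomp-3} with the refining subspace of each bad part chosen to lie inside the codimension-1 witness of irregularity from Proposition~\ref{thm:energy}(3), so that the codimension of the decomposition grows by only $\codim U$ per step and the energy gains at least $\epsilon^3 q^{-\codim U}$ per step. The only cosmetic differences are that the paper averages over the $q^{\codim U}-1$ nonzero cosets (giving the marginally sharper per-part gain $\epsilon^2/(q^{\codim U}-1)$) and does not dwell on the dimension bookkeeping, but your accounting is fine.
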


\begin{proof}
We iterate to produce a sequence of decompositions $\{V\}=\cD_0\preceq \cD_1\preceq\cdots\preceq \cD_M$ such that $\cD_M$ has the desired properties. The sequence will satisfy $\codim \cD_{m+1}=\codim\cD_m+\codim U$ and $\cE(\cP(\cD_{m+1}))>\cE(\cP(\cD_m))+\epsilon^3q^{-\codim U}$.

Suppose that we have produced some $\cD_m$ that does not satisfy property (2) above. Then there is some $1\leq i\leq k$ and $\epsilon|\cD_m|$ choices of $W\in\mathcal \cD_m$, say $\cD'\subset\cD_m$, such that for each $W\in \cD'$, there exists $x\in W\setminus U$ such that $f_i|_{x+W\cap U}$ is not $\epsilon$-regular.

Fix $W\in\cD'$ with associated $x\in W\setminus U$. Since $f_i|_{x+W\cap U}$ is not $\epsilon$-regular, by part (3) of Proposition~\ref{thm:energy}, there exists some $W'\leq W\cap U$ of codimension 1 such that $\mathcal E(\cP(W'|x+W\cap U))>\mathcal E(\cP(W\cap U|x+W\cap U))+\epsilon^2$. Now we pick $U'\leq W'$ such that $\dim W-\dim(W\cap U)=\dim (W\cap U)-\dim U'$. Applying Example~\ref{thm:decomp-3} to $U'\leq (W\cap U)\leq W$ gives a decomposition $\cD_W$ of $W$ with respect to $W\cap U$ such that the partition $\cP(\cD_W)$ of $W\setminus U$ refines the partition $\cP(W'|W\setminus U)$ (the partition of $W\setminus U$ into cosets of $W'$). This implies that \[\cE(\cP(\cD_W))>\cE(\cP(W\cap U|W\setminus U))+\frac{\epsilon^2}{q^{\codim U}-1}.\]

Let $\cD_{m+1}$ be the decomposition of $V$ with respect to $U$ defined as follows. For each $W\in\cD'$, replace $W$ with $\cD_W$, the decomposition defined in the previous paragraph. For the remaining $W\in\cD_m\setminus\cD'$, replace $W$ with an arbitrary instance of Example~\ref{thm:decomp-3}. This refinement satisfies $\codim\cD_{m+1}=\codim\cD_m+\codim U$.

Now we compute the energy increment
\begin{align*}
\mathcal E(\cP(\cD_{m+1}))-\mathcal E(\cP(\cD_m))
&=\E_{W\in\cD_m}\left[\cE(\cP(\cD_{m+1})|_{W\setminus U})-\cE(\cP(\cD_m)|_{W\setminus U})\right]\\
&\geq\frac{|\cD'|}{|\cD_m|}\E_{W\in\cD'}\left[\cE(\cP(\cD_{m+1})|_{W\setminus U})-\cE(\cP(\cD_m)|_{W\setminus U})\right]\\
&\geq\epsilon^3 q^{-\codim U}.
\end{align*}

Since $0\leq\mathcal E(\cP(\cD))\leq k$ for any decomposition $\cD$, this process halts after $M\leq q^{\codim U}k\epsilon^{-3}$ iterations, giving the desired result.
\end{proof}

Given a decomposition $\cD$ of $V$ with respect to $U$, define \[V(\cD):=\bigcap_{W\in\cD}W.\] Note that this satisfies $\codim V(\cD)\leq| \cD|\cdot\codim\cD=\codim\cD\cdot q^{\codim\cD}$. Furthermore, note that if $\cD$ is non-trivial, i.e., $\codim\cD>0$, then $V(\cD)\leq U$. 

\begin{thm}[Strong decomposition regularity]
\label{thm:strong-weak-reg}
For a finite dimensional $\FF_q$-vector space $V$, functions $f_1,\ldots,f_k\colon V\to[-1,1]$, a subspace $V_0\leq V$, and a parameter $\epsilon>0$, there exists $n_{\mathsf{improved-reg}}=n_{\mathsf{improved-reg}}(\epsilon,q,k,\codim V_0)$ such that there exists a subspace $V_1\leq V_0$ and a decomposition $\cD$ of $V$ with respect to $V_1$ such that
\begin{enumerate}
\item $\codim V_1, \codim \cD\leq n_{\mathsf{improved-reg}}$;
\item $\cE(\cP(V_1|V\setminus V_1))\leq\cE(\cP(\cD))+\epsilon$;
\item for each $1\leq i\leq k$, for all but an $\epsilon$-fraction of $W\in\cD$, for all $x\in W\setminus V_1$ the function $f_i|_{x+W\cap V_1}$ is $\epsilon$-regular.
\end{enumerate}
\end{thm}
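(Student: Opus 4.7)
My plan is to iterate Theorem~\ref{thm:weak-reg} in an energy-increment fashion, in close analogy with the derivation of Theorem~\ref{thm:strong-reg} from Green's arithmetic regularity. Since the weak lemma already has exponential-type bounds, composing it $O(k/\epsilon)$ times will give a tower-type bound on $n_{\mathsf{improved-reg}}$. I read the intended direction of condition~(2) as $\cE(\cP(\cD)) \leq \cE(\cP(V_1|V\setminus V_1)) + \epsilon$, since the direction literally stated is automatic from the fact that $\cP(\cD) \succeq \cP(V_1|V\setminus V_1)$.

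Concretely, I would build a decreasing chain $V_0 = V_1^{(0)} \geq V_1^{(1)} \geq \cdots$ as follows. Given $V_1^{(m)}$, apply Theorem~\ref{thm:weak-reg} with $U = V_1^{(m)}$ and parameter $\epsilon$ to produce a decomposition $\cD^{(m)}$ of $V$ with respect to $V_1^{(m)}$; condition~(3) of the target theorem is then immediate from the conclusion of Theorem~\ref{thm:weak-reg}. If the energy gap $\cE(\cP(\cD^{(m)})) - \cE(\cP(V_1^{(m)}|V\setminus V_1^{(m)}))$ is at most $\epsilon$, then $V_1 := V_1^{(m)}$ and $\cD := \cD^{(m)}$ satisfy condition~(2) and we stop. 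Otherwise set $V_1^{(m+1)} := V(\cD^{(m)}) = \bigcap_{W\in\cD^{(m)}}W$ and iterate; the remark preceding Theorem~\ref{thm:strong-weak-reg} guarantees $V_1^{(m+1)} \leq V_1^{(m)}$ in every non-terminal step.

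To bound the number of iterations I would pass all partitions to the ambient space $V$. Let $\tilde\cP_m$ denote the partition of $V$ obtained from $\cP(\cD^{(m)})$ by adjoining $V_1^{(m)}$ as one additional part, and let $\cP_m$ denote the partition of $V$ into cosets of $V_1^{(m)}$. Then $\cP_m \preceq \tilde\cP_m \preceq \cP_{m+1}$: the first refinement holds because every part $x + (W\cap V_1^{(m)})$ of $\cP(\cD^{(m)})$ sits inside the single coset $x + V_1^{(m)}$, and the second because $V(\cD^{(m)}) \leq W\cap V_1^{(m)}$ for every $W \in \cD^{(m)}$, so each coset of $V_1^{(m+1)}$ lies either inside $V_1^{(m)}$ or inside some part of $\cP(\cD^{(m)})$. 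A direct computation gives $\cE(\tilde\cP_m) - \cE(\cP_m) = (|V\setminus V_1^{(m)}|/|V|)\bigl(\cE(\cP(\cD^{(m)})) - \cE(\cP(V_1^{(m)}|V\setminus V_1^{(m)}))\bigr)$, which is at least $\epsilon/2$ when the stopping test fails (arranging $|V_1^{(m)}| \leq |V|/2$ by a constant-codimension preliminary refinement of $V_0$, as in the proof of Proposition~\ref{thm:regularity}). Proposition~\ref{thm:energy}(2) then yields $\cE(\cP_{m+1}) - \cE(\cP_m) \geq \epsilon/2$, so the iteration halts within $M \leq 2k/\epsilon$ steps. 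Composing $\codim\cD^{(m)} \leq \codim V_1^{(m)}\cdot q^{\codim V_1^{(m)}}k\epsilon^{-3}$ (Theorem~\ref{thm:weak-reg}) with $\codim V_1^{(m+1)} \leq \codim\cD^{(m)}\cdot q^{\codim\cD^{(m)}}$ (the inequality recorded immediately before Theorem~\ref{thm:strong-weak-reg}) $M$ times yields the advertised tower-type $n_{\mathsf{improved-reg}}$. The main technical obstacle is the mismatched ambient sets: the partitions $\cP(\cD^{(m)})$ live on the varying set $V\setminus V_1^{(m)}$, so comparing their energies across iterations is awkward; lifting each to a partition of $V$ by separating off $V_1^{(m)}$ restores a clean refinement chain on a fixed space and lets the standard Pythagorean energy argument run.
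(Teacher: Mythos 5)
Your proof is correct and takes essentially the same route as the paper: iterate the weak decomposition regularity lemma (Theorem~\ref{thm:weak-reg}), pass to $V(\cD)$ each round, and halt after $O(k/\epsilon)$ steps by an energy-increment argument. You also correctly diagnose the misprint in condition (2): as printed the inequality is the trivial one implied by $\cP(\cD) \succeq \cP(V_1|V\setminus V_1)$, and the direction actually needed (and used in the paper's second proof of Proposition~\ref{thm:regularity}) is $\cE(\cP(\cD)) \leq \cE(\cP(V_1|V\setminus V_1)) + \epsilon$. The only bookkeeping difference is that the paper tracks the monotone sequence $\cE(\cP(V_m))$ directly on the fixed ambient $V$ (using the reduction $|V_0|\leq(\epsilon/4)|V|$ and the sandwich $\cP(V_{M-1}|V\setminus V_{M-1}) \preceq \cP(\cD_M) \preceq \cP(V_M|V\setminus V_{M-1})$ to convert back to the decomposition energy gap), whereas you lift $\cP(\cD^{(m)})$ to a partition of $V$ by adjoining $V_1^{(m)}$ and stop exactly when condition (2) holds; both yield the same tower-type $n_{\mathsf{improved-reg}}$.
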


\begin{proof}
We can assume that $|V_0|\leq(\epsilon/4)|V|$.

Let $\cD_0$ be a decomposition of $V$ with respect to $V_0$ formed by applying Example~\ref{thm:decomp-3}. We iterate our weak arithmetic regularity lemma to produce a sequence of decompositions $\cD_0,\cD_1,\ldots,\cD_M$ such that $V(\cD_{M-1}),\cD_M$ has the desired properties. Let $V_m=V(\cD_m)$ and let $\cD_{m+1}$ be the result of applying Theorem~\ref{thm:weak-reg} to $V_m$ with parameter $\epsilon$. (Note that we must have $\dim V\geq \codim V_m\cdot q^{\codim V_m}\cdot k\cdot \epsilon^{-3}$ to apply Theorem~\ref{thm:weak-reg}. However, if this ever fails the desired result is trivially true.)

Note that $V_0\geq V_1\geq\cdots\geq V_M$ so $0\leq \mathcal E(\cP(V_0))\leq\mathcal E(\cP(V_1))\leq\cdots\leq\mathcal E(\cP(V_M))\leq k$. Therefore we can stop the iteration at $M\leq 2k\epsilon^{-1}$ such that $\mathcal E(\cP(V_M))\leq\mathcal E(\cP(V_{M-1}))+\epsilon/2$.

Now since $|V_{M-1}|\leq(\epsilon/4)|V|$, note that $\cE(\cP(V_M|V\setminus V_{M-1}))\leq\cE(\cP(V_{M-1}|V\setminus V_{M-1}))+\epsilon$. Furthermore $\cP(V_M|V\setminus V_{M-1})\succeq\cP(\cD_M)\succeq\cP(V_{M-1}|V\setminus V_{M-1})$. This implies property (2), completing the proof.
\end{proof}

\begin{proof}[Proof of Proposition~\ref{thm:regularity}]
We can assume that $|V_0|\leq(\epsilon/4)|V|$.

Apply Theorem~\ref{thm:strong-weak-reg} to $f_1,\ldots,f_k\colon V\to[-1,1]$ and $V_0\leq V$ with parameter $\min(\epsilon^3/4,1/(4k))$. Write $V_1\leq V_0$ for the subspace produced and $\cD$ for the decomposition produced. For a uniform random choice of $W\in\cD$, write $V_2=V_1\cap W$ and pick $U$ satisfying $U\oplus V_2=W$ arbitrarily. We claim that with positive probability, this choice of $U, V_2, V_1$ satisfies the desired properties.

First note that with probability at least $1-k(1/(4k))=3/4$ we have that for all $1\leq i\leq k$ and for all $x\in W\setminus V_1$ we have that $f_i|_{x+V_2}$ is $\epsilon$-regular.

Now by part (2) of Proposition~\ref{thm:energy}, we write
\begin{align*}
\epsilon^3/4
&\geq \cE(\cP(\cD))-\cE(\cP(V_1|V\setminus V_1))\\
&=\sum_{i=1}^k\norm{(f_i)_{\cP(V_1|V\setminus V_1)}-(f_i)_{\cP(\cD)}}_{L^2(V\setminus V_1)}^2\\
&=\E_{x\in V\setminus V_1}\left[\sum_{i=1}^k\left(\E_{y\in W(x)\cap V_1}[f_i(x+y)]-\E_{y\in V_1}[f_i(x+y)]\right)^2\right].
\end{align*}
In the last line above we write $W(x)\in\cD$ for the unique part of the decomposition containing $x$.

This implies that for a uniform random choice of $x\in V\setminus V_1$, with probability at least $1-\epsilon/4$, this $x$ satisfies
\begin{equation}
\label{eq:good-coset-decomp}
|\E_{y\in W(x)\cap V_1}[f_i(x+y)]-\E_{y\in V_1}[f_i(x+y)]|\leq \epsilon\quad\text{for all}\quad 1\leq i\leq k.\tag{$\S$}
\end{equation}
Therefore choosing $W\in\cD$ uniformly at random, the expected number of $x\in W$ failing to satisfy (\ref{eq:good-coset-decomp}) is at most $|W\cap V_1|+(\epsilon/4)(|W|-|W\cap V_1|)\leq(\epsilon/2)|W|$. By Markov's inequality, with probability at least $1/2$, at most an $\epsilon$-fraction of $x\in W$ fail to satisfy (\ref{eq:good-coset-decomp}). Thus by the union bound, with positive probability this choice of $U, V_2, V_1$ satisfies all the desired properties.
\end{proof}

\section{Extensions}
\label{sec:extensions}

\subsection{Infinite removal}
\label{ssec:infinite-removal}

First let us recall the infinite graph removal lemma \cite{AS08}.

\begin{thm}
For every (possibly infinite) set of graphs $\cH$ and $\epsilon>0$, there are $h_0$ and $\delta>0$ such that the following holds. If a graph $G$ has induced $H$-density at most $\delta$ for each $H\in\cH$ on at most $h_0$ vertices, then $G$ can be made induced $\cH$-free by changing at most an $\epsilon$-fraction of the edges.
\end{thm}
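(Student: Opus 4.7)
The plan is to adapt the proof of the induced graph removal lemma, using the strong regularity lemma and the observation that after the standard cleanup procedure the modified graph is a blow-up of a reduced graph on boundedly many vertices. Since there are only finitely many candidate reduced graphs, one can probe the infinite family $\cH$ using representatives of bounded size from each ``blow-up class,'' effectively reducing the infinite density hypothesis to a finite one.

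First I would apply a strong induced regularity lemma to $G$ to produce a partition $V(G) = V_1 \sqcup \cdots \sqcup V_k$ with $k \le K = K(\epsilon)$ together with a regular model $U_i \subseteq V_i$ of positive relative density such that every pair $(U_i, U_j)$ is $\eta$-regular and $d(V_i, V_j)$ matches $d(U_i, U_j)$ on average to within $\eta$. Then I would perform the standard cleanup: for each pair $i \neq j$, make $V_i$--$V_j$ complete bipartite if $d(U_i, U_j) \ge 1/2$ and empty otherwise, and empty each $V_i$ internally. For sufficiently small $\eta$, this modifies at most $(\epsilon/2)\,n^2$ edges and yields a graph $G'$ that is literally a blow-up of some graph $R$ on $k$ vertices, with each $V_i$ an independent block.

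The key step is the definition of $h_0$. There are at most $2^{\binom{K}{2}}$ possible reduced graphs $R$ on at most $K$ vertices. For each such $R$, let $\cH_R := \{H \in \cH : V(H) \text{ admits a partition into independent sets whose bipartite structure matches } R\}$ (equivalently, $H$ is an induced subgraph of some blow-up of $R$), and set $h_0(R) := \min\{|V(H)| : H \in \cH_R\}$ whenever $\cH_R \neq \emptyset$. Let $h_0$ be the maximum of these finitely many values; it depends only on $K$ and $\cH$. Finally, choose $\delta$ via the induced counting lemma so that any induced copy of some $H \in \cH$ with $|V(H)| \le h_0$ inside $G'$ would force $G$ to have induced $H$-density exceeding $\delta$.

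Under the density hypothesis, $G'$ then contains no induced $H \in \cH$ with $|V(H)| \le h_0$. Suppose toward a contradiction that $G'$ contained an induced $H \in \cH$ of arbitrary size; then $H \in \cH_R$ for the reduced graph $R$ of $G'$, so by the definition of $h_0$ there exists $H^\ast \in \cH_R$ with $|V(H^\ast)| \le h_0$. Provided each $|V_i| \ge h_0$ (which holds once $n$ exceeds a constant depending on $\epsilon$ and $\cH$; below that the theorem is vacuous), we can embed the partition of $H^\ast$ guaranteed by $H^\ast \in \cH_R$ into the corresponding blocks of $G'$, producing an induced copy of $H^\ast$ in $G'$ and thus the desired contradiction. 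The main obstacle is setting up the parameter hierarchy consistently: $K$ comes from regularity, then $h_0$ from $K$ and $\cH$ (its very existence relies on the enumeration over the finitely many $R$'s), and only then $\delta$; we also must absorb any blocks smaller than $h_0$ into the $\epsilon n^2$ edge budget.
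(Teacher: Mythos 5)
Your overall plan is the right one and mirrors the Alon--Shapira argument that the paper cites for this statement (it refers to \cite{AS08} and does not reprove it): apply strong regularity, observe that the resulting ``reduced structure'' lives in a bounded universe, and for each possible reduced structure pick the smallest $H\in\cH$ compatible with it to define $h_0$. The gap is in your cleanup step. You propose to make every pair $(V_i,V_j)$ complete bipartite when $d(U_i,U_j)\ge 1/2$ and empty otherwise (and to unconditionally empty every $V_i$), and you claim this changes at most $(\epsilon/2)n^2$ edges. That claim is false: if many pairs have $d(U_i,U_j)$ near $1/2$, rounding those densities to $\{0,1\}$ alters a constant fraction of all edges, independently of $\epsilon$. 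Strong regularity only guarantees that $d(V_i,V_j)$ tracks $d(U_i,U_j)$ on average; it cannot force those densities away from $1/2$. Consequently $G'$ is not within $\epsilon n^2$ edge-changes of $G$, and the entire ``$G'$ is a blow-up of a small graph $R$'' framing collapses, together with your definition of $\cH_R$.

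The repair --- which is what makes the Alon--Shapira argument actually work --- is to modify only extremal pairs (density within $\eta$ of $0$: delete all edges; within $\eta$ of $1$: add all edges; otherwise leave the pair untouched), and correspondingly to replace ``blow-up of a graph $R$'' by a three-state template: a complete graph on $k\le K(\epsilon)$ vertices in which every edge and every vertex carries one of three labels (empty, complete, or grey). Redefine $\cH_R$ as the set of $H\in\cH$ admitting a vertex map to $R$ that respects the labels, where grey pairs impose no constraint, and set $h_0=\max_R\min_{H\in\cH_R}|V(H)|$ over the finitely many labeled templates $R$ on at most $K$ vertices with $\cH_R\neq\emptyset$. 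Then an induced $H$-copy in the correctly cleaned-up $G'$ certifies $\cH_R\neq\emptyset$ for the realized template $R$, the minimal $H^*\in\cH_R$ has at most $h_0$ vertices, and the induced counting lemma applied in the regular model $U_1,\dots,U_k$ gives many induced copies of $H^*$ in $G$. Note that the grey label is essential to that last step: it guarantees that every edge- and non-edge-density the counting lemma invokes is bounded below by $\eta$, a guarantee that neither your $1/2$-threshold nor the unconditional internal emptying provides (e.g.\ $d(U_i,U_i)$ could be $1-\eta/2$ while you have forced $V_i$ to be independent). With that repair the parameter hierarchy and the rest of your argument are sound.
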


This result is proved by a short modification of the usual proof of induced graph removal. By performing an analogous modification to the proof of induced arithmetic removal it is possible to deduce the following infinite arithmetic removal lemma.

\begin{thm}
\label{thm:infinite}
Fix $\cH$ a (possibly infinite) set of $r$-colored complexity 1 patterns over $\FF_q$. For every $\epsilon>0$ there are $k_0$ and $\delta>0$ such that the following holds. Given a finite dimensional $\FF_q$-vector space $V$, if a coloring $\phi\colon V\to[r]$ has $H$-density at most $\delta$ for each $H\in\cH$ with at most $k_0$ columns, then $\phi$ can be made $\cH$-free by changing the color of at most an $\epsilon$-fraction of the elements.
\end{thm}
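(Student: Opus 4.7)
The plan is to adapt the proof of Theorem~\ref{thm:main} by introducing a cutoff $k_0 = k_0(\epsilon, \cH)$ and applying the three-step strategy (regularize, clean up, patch) to the finite subcollection $\cH_0 := \{H \in \cH : k_H \le k_0\}$. The choice of $k_0$ and the method for handling patterns $H \in \cH \setminus \cH_0$ form the two key modifications.

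I would select $k_0$ via the Ramsey dichotomy. Applying Proposition~\ref{thm:rado-dichotomy} to every finite $\cH' \subseteq \cH$ yields two cases. If some $\cH'$ is in case~(a), set $k_0 \ge \max_{H \in \cH'} k_H$ and $\delta < \epsilon_{\mathsf{rado}}(\cH')$: the hypothesis then becomes vacuous for $\dim V \ge n_{\mathsf{rado}}(\cH')$, since case~(a) forces some $H \in \cH' \subseteq \cH_0$ to have density above $\delta$. Otherwise every finite $\cH' \subseteq \cH$ is in case~(b), and a finite-intersection compactness argument in the finite space of $r$-colorings of $V_1 \setminus \{0\}$ produces an $\cH$-free coloring of $V_1 \setminus \{0\}$ for every finite-dimensional $V_1$ (the closed subsets of $\cH'$-free colorings over finite $\cH' \subseteq \cH$ have the finite-intersection property because a finite union of finite sets is finite). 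In this second case, I set the remaining parameters $\epsilon_{\mathsf{count}}$, $\epsilon_{\mathsf{reg}}$, $n_{\mathsf{reg}}$ via the parameter cascade of Theorem~\ref{thm:main} applied to $\cH_0$, then run Proposition~\ref{thm:regularity-recoloring} followed by the cleanup, and patch $V_1 \setminus \{0\}$ with an $\cH$-free coloring from compactness. This immediately rules out $H$-instances lying entirely in $V_1$ for any $H \in \cH$, and reproduces the $\cH_0$-freeness argument of Theorem~\ref{thm:main} for patterns of bounded size.

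I expect the main obstacle to be verifying that $\phi'$ is $H$-free for patterns $H \in \cH \setminus \cH_0$ whose $H$-instances span both $V_1$ and $V \setminus V_1$. For such $H$, the counting argument of Theorem~\ref{thm:main} yields a lower bound on the $H$-density of $\phi$, but the hypothesis provides no control on this when $k_H > k_0$. I expect this is resolved by a pigeonhole reduction: once $k_H$ exceeds a suitable multiple of $q^{\codim V_1}$, many variables of the $H$-instance must share a coset of $V_1$, so one can pass to a sub-sub-pattern of size at most $k_0$ on which the $\cH$-free patching applies, and then use complexity~1 together with the counting step of Theorem~\ref{thm:main} to recover a lower bound on the density of a pattern in $\cH_0$ and contradict the hypothesis. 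Making this pigeonhole precise and arranging the nested dependencies $k_0 \to $ parameters $\to \codim V_1$ consistently (likely by iterating the choice of $k_0$ against the growing regularity parameters) is the main technical task.
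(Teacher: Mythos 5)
The paper's proof of Theorem~\ref{thm:infinite} takes a genuinely different route from yours. Its key tool is the notion of a \emph{summary} of the recoloring $\phi \rightsquigarrow \phi'$: a finite combinatorial object $(\Phi, \chi)$, bounded in size solely by $\codim V_1$, $q$, and $r$, recording which colors survive in each coset of $V_1$ under $\phi'$ and the canonical coloring $\chi$ used to patch $V_1 \setminus \{0\}$. (Crucially, the patch coming out of Proposition~\ref{thm:rado-dichotomy} is always a canonical coloring.) The paper then selects a finite $\cH_d \subseteq \cH$ of representatives: for each of the finitely many possible summaries at codimension $d$, one pattern $H^* \in \cH$ that the summary can ``partially induce,'' if any exists. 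Proposition~\ref{thm:proof-summary}, a re-reading of the counting step of Theorem~\ref{thm:main}, then shows that whenever the summary partially induces an $H^*$-instance, the original $\phi$ has high $H^*$-density. Since any $H$-instance in $\phi'$, with $H \in \cH$ arbitrary and $k_H$ possibly huge, makes the summary partially induce an $H$-instance and hence an $H^*$-instance for some $H^* \in \cH_d$, this contradicts the hypothesis for the bounded-size pattern $H^*$. Compactness is thus applied at the level of summaries, not colorings.

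Your proposal has two genuine gaps. First, your compactness argument produces a coloring of $V_1 \setminus \{0\}$ that is $\cH$-free, but the counting step of Theorem~\ref{thm:main} requires the patch to be free of a data-dependent collection of \emph{subpatterns} of $\cH$-patterns (those whose density in $\phi|_{V_2}$ is below $\epsilon_{\mathsf{rado}}$); these subpatterns need not lie in $\cH$, so an $\cH$-free patch can still contain instances of such a subpattern $H'$, and then the lower bound $\Lambda_{A'}(g_1,\ldots,g_j) \ge \epsilon_{\mathsf{rado}}$ fails. Second, the pigeonhole reduction for $H$ with $k_H > k_0$ cannot close the argument: passing to a subpattern of size at most $k_0$ yields a pattern that need not be an element of $\cH$ (let alone one of the at most $k_0$-column patterns in $\cH$), so the hypothesis provides no density bound on it and no contradiction ensues. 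The pigeonhole is also inapplicable to $H$-instances lying entirely outside $V_1$, where the counting argument yields a high $H$-density in $\phi$ for a pattern on which the hypothesis is silent because $k_H > k_0$. The paper's summary trick sidesteps both issues: rather than relating a large $H$ to a bounded one through its own subpatterns, it relates them through their identical effect on the finite summary, for which a single finite set of representatives suffices uniformly.
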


To prove this infinite arithmetic removal lemma we use the same recoloring algorithm as in the proof of Theorem~\ref{thm:main} with one modification. We replace the regularity recoloring result (Proposition~\ref{thm:regularity-recoloring}) with the following strong variant.

\begin{prop}[Strong variant of Proposition~\ref{thm:regularity-recoloring}]
\label{thm:regularity-recoloring-strong}
For a finite dimensional $\FF_q$-vector space $V$, a coloring $\phi\colon V\to[r]$, and parameters $0<\epsilon\leq1$ and $\bm\epsilon'=(\epsilon_0',\epsilon_1',\ldots)$ satisfying $\epsilon_0'\geq\epsilon_1'\geq\cdots>0$, there is some $n_{\mathsf{reg}}=n_{\mathsf{reg}}(\epsilon,\bm\epsilon',q,r)$ such that there exist subspaces $V_2\leq V_1\leq V$, a complement $U$ satisfying $U\oplus V_1=V$, and a recoloring $\phi'\colon V\to[r]$ that agrees with $\phi$ on all but at most $\epsilon|V|$ values satisfying:
\begin{enumerate}
\item $1/\epsilon \leq \codim V_1\leq \codim V_2\leq n_{\mathsf{reg}}$;
\item if a color appears in some coset $x+V_1$ under $\phi'$, then at least an $\epsilon/(2r)$-fraction of $x+V_2$ is that color under $\phi$;
\item for each $x\in U\setminus\{0\}$, the original coloring $\phi|_{x+V_2}$ is $\epsilon'_{\codim V_1}$-regular.
\end{enumerate}
\end{prop}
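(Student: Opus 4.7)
The plan is to mimic the proof of Proposition~\ref{thm:regularity-recoloring} essentially verbatim, with the only substantive change being a strengthening of the underlying regularity result. What I need is a variant of Proposition~\ref{thm:regularity} whose conclusion (3) reads ``for each $x\in U\setminus\{0\}$ and $1\leq i\leq k$, the function $f_i|_{x+V_2}$ is $\epsilon_{\codim V_1}$-regular,'' where $\bm\epsilon=(\epsilon_0,\epsilon_1,\ldots)$ is any prescribed non-increasing sequence of positive parameters given in advance; everything else in its statement is unchanged.

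This strong variant of Proposition~\ref{thm:regularity} is obtained by rerunning its proof, invoking Theorem~\ref{thm:strong-reg} with $\delta=\epsilon_0^3/4$ and with the sequence whose $m$-th term is $\min(\epsilon_m, q^{-m}/(2k))$. The sequence enters Theorem~\ref{thm:strong-reg}'s conclusion (3) directly, so after choosing a uniformly random complement $U$ of $V_1$, a union bound over the $|U|-1<q^{\codim V_1}$ nonzero elements $x\in U$ and over the $k$ functions still costs at most $1/2$ of the probability thanks to the $q^{-m}/(2k)$ factor. The averaged closeness condition $\bigl|\E_{y\in V_1}f_i(x+y)-\E_{y\in V_2}f_i(x+y)\bigr|\leq\epsilon_0$ for most $x$, and the Markov argument that ensures it holds for most $x\in U$, go through exactly as before.

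With this strong variant in hand, the proof of Proposition~\ref{thm:regularity-recoloring-strong} is a direct copy of the proof of Proposition~\ref{thm:regularity-recoloring}: set $f_i=1_{\phi^{-1}(i)}$, pick $V_0$ of codimension $\lceil 1/\epsilon\rceil$, apply the strong variant to $f_1,\ldots,f_r$ with the parameter sequence $\epsilon_m''=\min(\epsilon/(4r),\epsilon_m')$, and perform the same recoloring, promoting in each coset $x+V_1$ every color that appears with density less than $\epsilon/(2r)$ in $x+V_2$ to a dominant color $i_x$ there. Properties (1) and (2) are immediate, property (3) is inherited from the strong variant, and the bound $|\{\phi\neq\phi'\}|\leq\epsilon|V|$ follows from the same two-case argument (a trivial bound on the small set of bad $x\in U$, and a $3\epsilon/4$-fraction estimate on the good ones).

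I do not anticipate any real obstacle. The one subtlety is the internal union bound inside the strong variant: one must pass from the user-supplied sequence $\bm\epsilon'$ to a sufficiently rapidly decaying majorant to compensate for the growth of $|U|$ with $\codim V_1$. This is exactly the role already played by $q^{-m}/(2k)$ in the proof of Proposition~\ref{thm:regularity}, so no new idea is required; only the bookkeeping of the two sequences $\bm\epsilon'$ and $\bm\epsilon''$ needs to be tracked carefully.
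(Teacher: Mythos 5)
Your proposal is correct and matches the paper's intended approach; the paper itself gives essentially no detail here (it only remarks that "the proof strategy given in Section~\ref{sec:regularity} also proves this strong variant with an appropriate choice of parameters"), and you have filled in exactly the right bookkeeping: pass the sequence $\min(\epsilon'_m,q^{-m}/(2k))$ into Theorem~\ref{thm:strong-reg}, keep the energy parameter $\delta$ tied to the fixed $\epsilon$ (your writing of $\delta=\epsilon_0^3/4$ works once one notes $\epsilon_0''\leq\epsilon/(4r)$, though $\delta=\epsilon^3/4$ would be the cleaner analogue of the paper's proof of Proposition~\ref{thm:regularity}), run the same random-complement union bound, and then copy the recoloring step of Proposition~\ref{thm:regularity-recoloring} verbatim.
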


The only difference between Proposition~\ref{thm:regularity-recoloring} and Proposition~\ref{thm:regularity-recoloring-strong} is that the degree of regularity in part (3) is now allowed to depend on $\codim V_1$. The proof strategy given in Section~\ref{sec:regularity} also proves this strong variant with an appropriate choice of parameters.

Now suppose we perform our recoloring procedure on an $r$-coloring $\phi\colon V\to[r]$. The output is a recoloring $\phi'\colon V\setminus\{0\}\to[r]$. Suppose that along the way the application of regularity produces $V_2\leq V_1\leq V$ where $\codim V_1=d$. We now define a ``summary'' of the recoloring $\phi\rightsquigarrow\phi'$ as follows. Let $\Phi\colon\FF_q^d\setminus\{0\}\to2^{[r]}\setminus\{\emptyset\}$ be the map which sends a coset $x+V_1$ to the set of colors which appear under $\phi'$ in $x+V_1$. The recoloring $\phi'|_{V_1}$ is a canonical coloring and thus is determined by a function $\chi\colon\FF_q\setminus\{0\}\to[r]$. We will call $(\Phi,\chi)$ the summary of the recoloring procedure $\phi\rightsquigarrow\phi'$. Note that the number of possible summaries is bounded in terms of $d$ (and $q,r$).

\begin{defn}
Let $H=(A,\psi)$ be an $r$-colored pattern over $\FF_q$ where $A$ has dimensions $\ell\times k$. Given $\Phi\colon\FF_q^d\setminus\{0\}\to2^{[r]}\setminus\{\emptyset\}$ and $\chi\colon\FF_q\setminus\{0\}\to[r]$, say that $(\Phi,\chi)$ \emph{partially induces an $H$-instance} if there exists $\bx\in(\FF_q^d)^k$ such that $A\bx=\bm0$ and setting $I\subset[k]$ to be the set of $i\in[k]$ such that $x_i=0$ the following holds:
\begin{enumerate}
    \item $\psi(i)\in\Phi(x_i)$ for each $i\in[k]\setminus I$;
    \item for some $H'=(A',\psi|_I)$, a subpattern of $H$ restricted to the variables $I$, the $\chi$-canonical coloring of $\FF_q^n\setminus\{0\}$ contains an $H'$-instance for all large enough $n$.
\end{enumerate}
\end{defn}

This definition is chosen so that if $\phi'$ contains an $H$-instance, then the summary $(\Phi,\chi)$ partially induces an $H$-instance. A careful reading of the proof of the main theorem shows that it proves the following:
\begin{prop}
\label{thm:proof-summary}
    Let $H$ be an $r$-colored complexity 1 pattern and fix $\epsilon>0$. There exist $\epsilon_{\mathsf{strong}}=\epsilon_{\mathsf{strong}}(\epsilon, H)>0$ and $\delta=\delta(\epsilon, H)>0$ such that the following holds. Suppose the recoloring procedure of Proposition~\ref{thm:regularity-recoloring-strong} applied to $\phi$ with parameters $\epsilon$ and $\bm\epsilon'=(\epsilon_{\mathsf{strong}},\epsilon_{\mathsf{strong}},\ldots)$ produces $\phi'$. Let $(\Phi,\chi)$ be the summary of this recoloring. If $(\Phi,\chi)$ partially induces an $H$-instance, then the $H$-density in $\phi$ is more than $\delta$.
\end{prop}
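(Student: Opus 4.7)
The plan is to re-run the counting argument from the proof of Theorem~\ref{thm:main} almost verbatim, since the definition of ``$(\Phi,\chi)$ partially induces an $H$-instance'' is precisely the structure that argument extracts from an $H$-instance in $\phi'$. Recall that in Theorem~\ref{thm:main}, after writing an $H$-instance $\bm x$ in $\phi'$ as $\bm u+\bm v$ with $\bm u\in U^k$ and $\bm v\in V_1^k$, the only two facts actually used are: (i) for each $i$ with $u_i\neq 0$, the color $\psi(i)$ appears in the coset $u_i+V_1$ under $\phi'$; and (ii) the coordinates with $u_i=0$ form an $H'$-instance in $\phi'|_{V_1}$ for an appropriate subpattern $H'$ of $H$. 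These are exactly the two clauses of the partial-induction hypothesis, once one identifies $\bm x\in(\FF_q^d)^k$ with a lift $\bm u\in U^k$ via $U\cong V/V_1$. Note that the direct-sum decomposition forces $A\bm u=0$ in $V$, which is what makes restricting to $\bm y=\bm u+V_2^k$ a valid source of $H$-instances.

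For the constants, I would copy the setup of Theorem~\ref{thm:main}, now specialized to the single pattern $H=(A,\psi)$ with $A$ of dimensions $\ell\times k$. Let $\tilde\cH_H$ be a finite set of representatives for the subpatterns of $H$, and set $\epsilon_{\mathsf{rado}}:=\min_{\cH'\subseteq\tilde\cH_H}\epsilon_{\mathsf{rado}}(\cH')$ and $\epsilon_{\mathsf{count}}:=\frac{1}{2k}(\epsilon/(2r))^k\epsilon_{\mathsf{rado}}$. Pick $\epsilon_{\mathsf{strong}}>0$ via the complexity 1 hypothesis applied to every subpattern in $\tilde\cH_H$, so that $|\Lambda_{A''}(g_1,\ldots)|\leq\epsilon_{\mathsf{count}}$ whenever some $\|\wh{g}_i\|_\infty\leq\epsilon_{\mathsf{strong}}$. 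Finally choose $\delta$ smaller than $q^{-k\,n_{\mathsf{reg}}}((\epsilon/(2r))^k\epsilon_{\mathsf{rado}}-k\epsilon_{\mathsf{count}})$, where $n_{\mathsf{reg}}$ is the codimension bound coming out of Proposition~\ref{thm:regularity-recoloring-strong}. Small dimensions of $V$ are swept into the trivial case exactly as in the proof of Theorem~\ref{thm:main}.

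For the counting step, take the partial-induction data $\bm x, I, H'=(A',\psi|_I)$, relabel so $I=\{1,\ldots,j\}$, lift $\bm x$ to $\bm u\in U^k$, and set $f_i:=1_{\phi^{-1}(\psi(i))}$ on $V$ and $g_i(y):=f_i(y+u_i)$ on $V_2$. For each $i>j$, the clause $\psi(i)\in\Phi(x_i)$ says color $\psi(i)$ appears in $u_i+V_1$ under $\phi'$, so property~(2) of Proposition~\ref{thm:regularity-recoloring-strong} gives $\EE g_i\geq\epsilon/(2r)$. For each $i\leq j$, the partial induction provides an $H'$-instance in the $\chi$-canonical coloring for all large $n$; since $\phi'|_{V_1}$ is itself the $\chi$-canonical coloring and $\dim V_1$ exceeds the threshold, $\phi'|_{V_1}$ has an $H'$-instance. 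The algorithm's construction of $\chi$ via Proposition~\ref{thm:rado-dichotomy} then forces $H'\notin\cH':=\{H''\in\tilde\cH_H:H''\text{-density in }\phi|_{V_2}<\epsilon_{\mathsf{rado}}\}$, yielding $\Lambda_{A'}(g_1,\ldots,g_j)\geq\epsilon_{\mathsf{rado}}$. Copying the chain of inequalities from Theorem~\ref{thm:main} (restrict the outer average to $\bm y=\bm u+V_2^k$ using $A\bm u=0$, then iterate~(\ref{eq:complexity-1}) to replace $g_{j+1},\ldots,g_k$ by their averages at cost $\epsilon_{\mathsf{count}}$ each) gives $\Lambda_A(f_1,\ldots,f_k)>\delta$, which is the desired conclusion.

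The main obstacle is conceptual rather than technical: one has to make explicit that the ``recoloring procedure'' whose outcome is encoded by the summary $(\Phi,\chi)$ bundles the Ramsey-dichotomy step (Proposition~\ref{thm:rado-dichotomy}) together with the strong regularity recoloring, so that $\chi$ is not arbitrary but is chosen precisely so that the $\chi$-canonical coloring is $\cH'$-free for the $\cH'$ above. Without this bookkeeping, the lower bound on $\Lambda_{A'}(g_1,\ldots,g_j)$ disappears and the counting breaks; with it, the proof is essentially a transcription of the Theorem~\ref{thm:main} argument specialized to a single pattern $H$.
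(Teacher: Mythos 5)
Your proposal is correct and follows exactly the route the paper takes: the paper gives no separate proof of this proposition, only the remark that ``a careful reading of the proof of the main theorem shows that it proves the following,'' and your write-up is precisely that careful reading, with the constants specialized to a single pattern $H$ and the density threshold adjusted from $\epsilon/(4r)$ to $\epsilon/(2r)$ because Proposition~\ref{thm:regularity-recoloring-strong} is invoked with parameter $\epsilon$ rather than $\epsilon/2$. Your closing observation is also the right one to flag: the statement reads as though $(\Phi,\chi)$ comes solely from Proposition~\ref{thm:regularity-recoloring-strong}, but $\chi$ is in fact produced by the Ramsey-dichotomy patch (applied to the subpatterns $\cH'$ of low density in $\phi|_{V_2}$), and it is the $\cH'$-freeness of the $\chi$-canonical coloring that converts clause (2) of ``partially induces'' into the lower bound $\Lambda_{A'}(g_1,\ldots,g_j)\geq\epsilon_{\mathsf{rado}}$ — without this bookkeeping the counting chain would not close.
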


Now we give the idea of the proof of Theorem~\ref{thm:infinite}. Fix $\cH$ a possibly infinite set of $r$-colored complexity 1 patterns over $\FF_q$. We define finite subsets $\cH_1\subseteq\cH_2\subseteq\cdots\subseteq\cH$  as follows. Fix some $d$. Consider the finite set of all possible pairs $\Phi\colon\FF_q^d\setminus\{0\}\to2^{[r]}\setminus\{\emptyset\}$ and $\chi\colon\FF_q\setminus\{0\}\to[r]$. For each $(\Phi,\chi)$, if there exists any $H\in\cH$ such that $(\Phi,\chi)$ contains an $H$-instance add one such $H$ to $\cH_d$. This ensures that $\cH_d$ is finite.

Given $\epsilon>0$, we set \[\epsilon_d'=\min_{H\in\cH_d}\epsilon_{\mathsf{strong}}(\epsilon, H)\quad\text{and}\quad\delta_d=\min_{H\in\cH_d}\delta(\epsilon, H).\] We apply Proposition~\ref{thm:regularity-recoloring-strong} with $\bm\epsilon'=(\epsilon_0',\epsilon_1',\ldots)$; all other parts of the recoloring produces remain the same. At the end we obtain a coloring that we claim is $\cH$-free. However, by the definitions we made in the previous paragraph, it suffices to check that it is $\cH_{\codim V_1}$-free. Taking $\delta=\delta_{n_{\mathsf{reg}}(\epsilon,\bm\epsilon',q,r)}$, Proposition~\ref{thm:proof-summary} implies that if the recoloring is not $\cH_{\codim V_1}$-free, then it has $H$-density more than $\delta_{\codim V_1}\geq\delta$ for some $H\in\cH_{\codim V_1}\subseteq\cH$. Finally taking $k_0$ to be the maximum number of columns in any of the patterns of $\cH_{n_{\mathsf{reg}}(\epsilon,\bm\epsilon',q,r)}$ completes the argument.

\subsection{Inhomogeneous patterns}
\label{ssec:inhomogeneous}

\begin{defn}
Let $H=(A,\psi)$ be an $r$-colored pattern over $\FF_q$ where $A$ is an $\ell\times k$ matrix. For a finite dimensional $\FF_q$-vector space $V$, a coloring $\phi\colon V\to [r]$, and an offset $\bm b\in V^\ell$, say that $\bx\in V^k$ is an \emph{$(H,\bm b)$-instance} if $A\bx=\bm b$ and $\phi(x_i)=\psi(i)$.
\end{defn}

\begin{thm} 
Fix a finite set $\cH$ of $r$-colored complexity 1 patterns over $\FF_q$. For every $\epsilon>0$ there exists $\delta=\delta(\epsilon,\cH)>0$ such that the following holds. Given a finite dimensional $\FF_q$-vector space $V$, if a coloring $\phi\colon V\to[r]$, and offsets $\{\bm b_H\}_{H\in\cH}$ (where for $H=(A,\psi)\in\cH$ where $A$ has $\ell_H$ rows we have $\bm b_H\in V^{\ell_H}$) satisfy that $\phi$ has $(H,\bm b_H)$-density at most $\delta$ for every $H\in\cH$, then there exists a recoloring $\phi'\colon V\setminus B\to[r]$ that is $(H,\bm b_H)$-free for all $H\in\cH$ and differs from $\phi$ on at most $\epsilon|V|$ elements. Here \[B=\spn_{H\in \cH,i\in[\ell_H]}(\bm b_H)_i.\]
\end{thm}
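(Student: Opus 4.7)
The plan is to reduce this inhomogeneous statement to the main theorem (Theorem~\ref{thm:main}) via the decomposition $V = W \oplus B$, where $W\leq V$ is any chosen complement of $B$. The key idea is to encode the coloring $\phi\colon V\to[r]$ as a ``mega-coloring'' $\Phi\colon W\to[r]^B$ defined by $\Phi(w)(c):=\phi(w+c)$. Since $|B|\leq q^{\sum_H\ell_H}$ is bounded by a constant depending only on $\cH$, the color set $[r]^B$ is of bounded size.

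Writing $x_i = w_i+c_i$ with $\bm w\in W^k$ and $\bm c\in B^k$, an $(H,\bm b_H)$-instance decomposes as $A\bm w = \bm 0$ (homogeneous in $W$, since $W$ is an $\FF_q$-subspace), $A\bm c = \bm b_H$ (only finitely many choices of $\bm c$ since $B$ is finite), and $\Phi(w_i)(c_i)=\psi(i)$ for every $i$. Accordingly I will form a finite set $\cH^*$ of complexity~1 homogeneous patterns over $\FF_q$ with color set $[r]^B$: for each $H=(A,\psi)\in\cH$, each $\bm c\in B^k$ satisfying $A\bm c = \bm b_H$, and each $\Psi\colon[k]\to[r]^B$ with $\Psi(i)(c_i)=\psi(i)$ for all $i$, include the pattern $(A,\Psi)$ in $\cH^*$. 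Each such pattern is complexity~1 (same matrix as its parent in $\cH$), and $|\cH^*|$ is bounded in terms of $\cH$ alone.

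Next I will apply Theorem~\ref{thm:main} to $\Phi$ with pattern set $\cH^*$ and parameter $\epsilon/|B|$ to obtain an $\cH^*$-free recoloring $\Phi'\colon W\setminus\{0\}\to[r]^B$ differing from $\Phi$ on at most $(\epsilon/|B|)|W|$ points. Pulling back, define $\phi'\colon V\setminus B\to[r]$ by $\phi'(w+c):=\Phi'(w)(c)$ for $w\in W\setminus\{0\}$ and $c\in B$; this is well-defined because $v\in V\setminus B$ has a unique decomposition $v=w+c$ with $w\in W\setminus\{0\}$. The three verifications are: \emph{(density transfer)} an $(H,\bm b_H)$-density at most $\delta$ in $\phi$ implies $(A,\Psi)$-density at most $|B|^{k_{\max}}\delta$ in $\Phi$ for every $(A,\Psi)\in\cH^*$, by a double-counting argument in which each $\cH^*$-pattern is strictly more restrictive than its parent $(H,\bm b_H)$-pattern, so choosing $\delta$ small enough in terms of the $\delta$ afforded by Theorem~\ref{thm:main} handles the hypothesis; \emph{(pattern-freeness)} any $(H,\bm b_H)$-instance of $\phi'$ in $(V\setminus B)^k$ would lift via the decomposition above to an $\cH^*$-instance of $\Phi'$ in $(W\setminus\{0\})^k$, contradicting $\cH^*$-freeness; \emph{(modification count)} each $w\in W\setminus\{0\}$ with $\Phi'(w)\neq\Phi(w)$ causes at most $|B|$ pointwise disagreements between $\phi'$ and $\phi$, so the total is at most $|B|\cdot(\epsilon/|B|)|W|=\epsilon|V|$.

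There is no new technical obstacle: regularization, cleanup, and the Ramsey patching step are all encapsulated inside Theorem~\ref{thm:main}, and the reduction is essentially combinatorial bookkeeping. The only subtlety to watch is verifying that all constants depend only on $\epsilon$ and $\cH$ (through $q$, $r$, $k_{\max}$, and $\sum_H\ell_H$) rather than on $V$ or on the particular offsets $\bm b_H$; this follows since $|B|\leq q^{\sum_H\ell_H}$ is an absolute bound, so the effective number of colors $r^{|B|}$ for $\Phi$ and the cardinality of $\cH^*$ are controlled uniformly.
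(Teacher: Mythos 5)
Your proposal is correct and matches the paper's own proof: both decompose $V$ as $B$ plus a complement, form the mega-coloring $\tilde\phi\colon\tilde V\to[r]^B$ (your $\Phi\colon W\to[r]^B$), replace each inhomogeneous pattern $(H,\bm b_H)$ by the finite family of homogeneous $[r]^B$-colored patterns on the same matrix $A$ (your $\cH^*$ is precisely the $N=|B|^{k-\rank A}r^{k(|B|-1)}$ patterns the paper names), and then invoke Theorem~\ref{thm:main}. You supply the density-transfer, freeness, and modification-count bookkeeping that the paper leaves to the reader; the only slip is cosmetic, as $|B|\cdot(\epsilon/|B|)|W|=\epsilon|W|=\epsilon|V|/|B|\leq\epsilon|V|$ rather than equaling $\epsilon|V|$, so using parameter $\epsilon$ rather than $\epsilon/|B|$ would already suffice.
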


We prove this by reduction to the main theorem. We have a subspace $B\leq V$ where $B$ is of bounded size. Pick a complement $\tilde V$ satisfying $B\oplus \tilde V=V$. Now we convert $\phi\colon V\to [r]$ to $\tilde\phi\colon\tilde V\to[r]^B$ in the obvious manner: $\tilde\phi(x):=(\phi(x+u))_{u\in B}$. One can check that given a pattern $H=(A,\psi)$ where $A$ is an $\ell\times k$ matrix and an offset $\bm b\in B^\ell$ we can create a set of $N=|B|^{k-\rank A}r^{k(|B|-1)}$ patterns $H_1,\ldots,H_N$ so that every $(H,\bm b)$-instance in $\phi$ becomes an $H_i$-instance in $\tilde\phi$ for some $1\leq i\leq N$. Then applying the main theorem to this finite set of homogeneous patterns, we deduce the desired result.


\end{document}